\theoremstyle{plain}
  \newtheorem{thm}{Theorem}[section]
  \newtheorem{lem}[thm]{Lemma}
  \newtheorem{cor}[thm]{Corollary} 
	\newtheorem*{thm*a}{Theorem A}
	\newtheorem*{thm*b}{Theorem B}
\theoremstyle{definition}
  \newtheorem{rmk}[thm]{Remark}
  \newtheorem*{ack*}{Acknowledgement}
  \newtheorem*{ques*}{Question}
\theoremstyle{plain}
\numberwithin{equation}{section}
\newcommand\ip[2]{\langle{#1},{#2}\rangle}
\newcommand\oh{\frac{1}{2}}
\newcommand\dd{{\mathrm d}}
\newcommand\w{\wedge}
\newcommand\sm{\sigma}
\newcommand\dt{\delta}
\newcommand\vep{\varepsilon}
\newcommand\vph{\varphi}
\newcommand\kp{\kappa}
\newcommand\af{\alpha}
\newcommand\bt{\beta}
\newcommand\ld{\lambda}
\newcommand\Sm{\Sigma}
\newcommand\Gm{\Gamma}
\newcommand\Ld{\Lambda}
\newcommand\Dt{\Delta}
\newcommand\CR{\mathcal{R}}
\newcommand\CQ{\mathcal{Q}}
\newcommand\BCP{\mathbb{CP}}
\newcommand\BHP{\mathbb{HP}}
\newcommand\BR{\mathbb{R}}
\newcommand\BZ{\mathbb{Z}}
\newcommand\br{\bar}
\newcommand\bu{\mathbf{u}}
\newcommand\bv{\mathbf{v}}
\newcommand\be{\mathbf{e}}
\newcommand\bfZ{\mathbf{Z}}
\newcommand\scc{\kp}
\DeclareMathOperator{\Ric}{Ric}
\newcommand\scd{{\scc}_{1}}
\newcommand\sct{{\scc}_{2}}
\newcommand\Rcd{{\Ric}_{1}}
\newcommand\Rct{{\Ric}_{2}}
\newcommand\ST{S^{[2]}}
\newcommand\heat{\frac{\dd}{\dd t} - \Dt}
\newcommand\gstd{g_{\mathrm{std}}}
\newcommand\gFS{g_{\mathrm{FS}}}
\begin{document}

\title{A New Monotone Quantity in Mean Curvature Flow Implying Sharp Homotopic Criteria}

\author{Chung-Jun Tsai}
\address{Department of Mathematics, National Taiwan University, and National Center for Theoretical Sciences, Math Division, Taipei 10617, Taiwan}
\email{cjtsai@ntu.edu.tw}

\author{Mao-Pei Tsui}
\address{Department of Mathematics, National Taiwan University, and National Center for Theoretical Sciences, Math Division, Taipei 10617, Taiwan}
\email{maopei@math.ntu.edu.tw}

\author{Mu-Tao Wang}
\address{Department of Mathematics, Columbia University, New York, NY 10027, USA}
\email{mtwang@math.columbia.edu}


\thanks{C.-J.~Tsai is supported in part by the National Science and Technology Council grants 112-2636-M-002-003 and 112-2628-M-002-004-MY4.  M.-P.~Tsui is supported in part by the National Science and Technology Council grants 109-2115-M-002-006 and 112-2115-M-002-015-MY3. M.-T. ~Wang is supported in part by the National Science Foundation under Grants DMS-1810856 and DMS-2104212.  Part of this work was carried out when M.-T.~Wang was visiting the National Center of Theoretical Sciences.}

\begin{abstract}
A new monotone quantity in graphical mean curvature flows of higher codimensions is identified in this work. The submanifold deformed by the mean curvature flow is the graph of a map between Riemannian manifolds, and the quantity is monotone increasing under the area-decreasing condition of the map. The flow provides a natural homotopy of the corresponding map and leads to sharp criteria regarding the homotopic class of maps between complex projective spaces, and maps from spheres to complex projective spaces, among others. 
\end{abstract}

\maketitle

\section{Introduction}

We start with the following theorem and its corollary proved in \cite{TW04}.

\begin{thm}
Let $f: (S^n, \gstd) \to (S^m, \gstd) $ be an area-decreasing smooth map between standard unit spheres of dimensions $n, m\geq 2$. Let $\Gamma_f$ be the graph of $f$ as a submanifold of $S^n\times S^m$, then the mean curvature flow of $\Gamma_f$ in $S^n\times S^m$ exists smoothly and remains graphical for all time, and converges smoothly to the graph of a constant map.
\end{thm}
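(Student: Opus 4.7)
The plan is to carry out a standard mean curvature flow program in three stages: preservation of a quantitative graphical property, resulting curvature bounds with long-time existence, and finally convergence. The central object will be the pullback to $\Gamma_f$ of a parallel form on the ambient product, whose evolution captures the area-decreasing condition.

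First I would describe $\Gamma_f$ in terms of singular values. At each point of $\Gamma_f$, choose orthonormal frames adapted to the product structure so that $df$ is diagonalized with singular values $\ld_1,\dots,\ld_n\geq 0$ (setting $\ld_i=0$ for $i>\min(n,m)$). The graphical condition is $\prod_i(1+\ld_i^2)^{1/2}<\infty$ together with the non-degeneracy of the projection to $S^n$; the area-decreasing condition is $\ld_i\ld_j<1$ for every $i\neq j$. The natural quantity to monitor is $\ast\Om$, where $\Om$ is the volume form of the first factor $S^n$ pulled back to $S^n\times S^m$. In these frames one has $\ast\Om=\prod_i(1+\ld_i^2)^{-1/2}$, and $\ast\Om>0$ is equivalent to $\Gm_f$ being graphical.

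Second, I would compute the evolution equation $(\heat)(\ast\Om)$ along mean curvature flow. A computation in a normal frame produces a reaction term quadratic in the second fundamental form, with a sign governed by the curvature of $S^n\times S^m$. Because both factors are round unit spheres, the curvature contribution is favorable, and when combined with the reaction term one gets an inequality of the form $(\heat)(\ast\Om)\geq(\ast\Om)\cdot(\text{nonneg. in }\ld_i\ld_j<1)$ whenever all $\ld_i\ld_j<1$. More invariantly, I would work with a symmetric $2$-tensor built from $\ast\Om$ on the $2$-plane sections of $T\Gm_f$, whose positivity is exactly the area-decreasing condition, and apply Hamilton's tensor maximum principle. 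The hard step here is verifying that the zero-order reaction term together with the positive curvature contributions of $S^n$ and $S^m$ preserve the cone $\{\ld_i\ld_j<1\ \forall i\neq j\}$; this is where the dimension hypothesis $n,m\geq 2$ enters, since in codimension one the area-decreasing condition collapses to $|df|<1$ and the mechanism is different. From the preserved cone one gets a uniform positive lower bound for $\ast\Om$.

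Third, with a definite lower bound on $\ast\Om$, standard White--Ecker--Huisken-type arguments give a bound on $|A|^2$, hence higher derivative bounds, so the flow exists smoothly for all $t\in[0,\infty)$ and $\Gm_{f_t}$ remains graphical. For the convergence stage I would use that the total area along the flow is monotone decreasing and bounded below, so $\int_0^\infty\int|H|^2\,d\mu\,dt<\infty$; combined with the $|A|$-bound this forces $|H|\to0$ uniformly. Any smooth subsequential limit is a minimal graph in $S^n\times S^m$ satisfying the strict area-decreasing condition; by a standard rigidity statement (or by running the elliptic version of the above evolution argument) such a minimal graph over $S^n$ must be a slice $S^n\times\{p\}$, i.e. the graph of a constant map. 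A further monotonicity/uniqueness argument along the flow then upgrades subsequential to full smooth convergence.

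The main obstacle I expect is the second step: isolating the correct tensorial quantity on $\Gm_f$ and showing that the reaction terms, after using the sphere curvatures, admit the area-decreasing cone as an invariant set under the ODE associated to the PDE. Everything else (long-time existence, subconvergence, and identification of the limit) is relatively routine once that preservation is in hand.
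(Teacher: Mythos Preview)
Your outline follows the right three-stage architecture, and it is essentially the strategy of the original Tsui--Wang proof \cite{TW04} that this paper cites. However, there is a genuine confusion in the second step that would block the argument as written.

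The quantity $\ast\Om$ you propose detects only the graphical condition; it does \emph{not} encode the area-decreasing condition, and no tensor ``built from $\ast\Om$'' will. The correct object is the parallel symmetric $2$-tensor $S = \pi_1^*g_1 - \pi_2^*g_2$ on $S^n\times S^m$. Its restriction to $\Gm_f$ has eigenvalues $\frac{1-\ld_i^2}{1+\ld_i^2}$, so $2$-positivity of $F^*S$ (the sum of any two eigenvalues being positive) is exactly $\ld_i\ld_j<1$. One then passes to the induced endomorphism $\ST$ on $\Ld^2(T\Gm_f)$, whose positivity is equivalent to $2$-positivity of $F^*S$, and applies Hamilton's tensor maximum principle to the evolution of $\ST$. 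That is the Tsui--Wang mechanism; your sentence ``a symmetric $2$-tensor built from $\ast\Om$ on the $2$-plane sections'' misidentifies the source tensor, and without $S$ you will not see the cancellation in the reaction terms that makes the cone $\{\ld_i\ld_j<1\}$ invariant. Also, your explanation of the role of $m\geq2$ is off: for $m=1$ the condition $\ld_i\ld_j<1$ is vacuous (at most one singular value is nonzero), so the hypothesis is there to make the statement nontrivial, not because the mechanism changes.

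It is worth noting that the present paper takes a different route even after identifying $\ST$: rather than the tensor maximum principle, it introduces the \emph{scalar} quantity $\log\det\ST$ and derives the differential inequality $(\heat)\log\det\ST \geq 2|A|^2 + 2\CR_S$, with $\CR_S\geq0$ under the sphere curvatures. This gives monotonicity of a single function, from which preservation of area-decreasing, long-time existence, and exponential decay $\ld_i\to0$ all follow by the ordinary (scalar) maximum principle. Your convergence scheme via $\int\!\!\int|H|^2<\infty$ and minimal-graph rigidity is a legitimate alternative, but the rigidity step (an area-decreasing minimal graph in $S^n\times S^m$ is totally geodesic) is itself nontrivial and typically proved by the same evolution-equation/Bochner computation, so it does not really save work compared to showing $|A|\to0$ directly from the monotone quantity.
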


\begin{cor} \label{TW04}
Any area-decreasing map from $(S^n, \gstd)$ to  $(S^m, \gstd)$ with dimensions $n,m\geq2$ must be homotopically trivial. 
\end{cor}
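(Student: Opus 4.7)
The plan is to use the preceding theorem essentially as a black box: the hypothesis of the theorem is exactly that $f$ is area-decreasing with $n,m\ge 2$, and the conclusion produces a smooth mean curvature flow $(\Gm_f)_t$ of submanifolds of $S^n\times S^m$ that stays graphical for all time and converges smoothly to the graph of a constant map. The idea is to read this flow as a homotopy of maps from $S^n$ to $S^m$.

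First I would unpack the graphicality statement: since each time slice $\Gm_t$ is the graph of some smooth $f_t:S^n\to S^m$ (with $f_0=f$), the assignment $(x,t)\mapsto f_t(x)$ defines a smooth map $F:S^n\times[0,\infty)\to S^m$. Smoothness of the mean curvature flow on $S^n\times S^m$, combined with the fact that the projection $S^n\times S^m\to S^n$ restricted to each $\Gm_t$ is a diffeomorphism (the graphical condition), gives that $F$ is at least continuous, which is all that is required for a homotopy.

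Next I would use the convergence statement. Smooth convergence of $\Gm_t$ to the graph of a constant map $c:S^n\to S^m$ as $t\to\infty$ is stronger than uniform convergence of $f_t$ to $c$, so after a time reparametrization $s=\tanh t$ sending $[0,\infty)$ to $[0,1)$ and extending by $F(\cdot,1)=c$, one obtains a continuous map $\tilde F:S^n\times[0,1]\to S^m$ with $\tilde F(\cdot,0)=f$ and $\tilde F(\cdot,1)\equiv c$. This is a null-homotopy of $f$, establishing the corollary.

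The only real point of care, and thus the mildest of obstacles, is the passage to the limit: continuity of $\tilde F$ at $s=1$ has to be extracted from the smooth convergence of the submanifolds rather than assumed a priori. Since the theorem explicitly asserts smooth convergence of $\Gm_t$ to the graph of the constant map, and graphs of $C^0$-close maps are $C^0$-close submanifolds and vice versa (in the graphical regime), this causes no difficulty, and no further geometric analysis beyond the cited theorem is needed.
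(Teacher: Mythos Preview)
Your proposal is correct and matches the paper's approach: the corollary is stated as an immediate consequence of the preceding theorem, with the mean curvature flow furnishing the null-homotopy. Your care about continuity at the limiting time is more detail than the paper itself provides, but it is the right point to check and causes no difficulty.
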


Explicit examples of non-trivial homotopy classes in  $\pi_n(S^m)$ when $n\geq m$ inspire interest to find local geometric conditions that determine the homotopy class of a map from $S^n$ to  $S^m$. The ``stretch" or ``1-dilatation" of such a map has been considered by Lawson \cite{Lawson1968}, Hsu \cite{Hsu1972}, Gromov \cite{Gr78}, and DeTurck--Gluck--Storm \cite{DGS13}. In \cite{Gr96}, Gromov proposed to study how the $k$-dilation of a map $S^n \to S^m$ is related to its homotopy class. 
In particular, he showed in \cite{Gr96}*{page 179 and Corollary on page 230} (see also \cite{Gu07}*{Section 2}) that there exists a constant  $\vep(n,m) > 0$ such that a map $f: (S^n, \gstd) \to (S^m, \gstd) $ with $2$-dilation (see below) less than $\vep(n,m)$ everywhere must be homotopically trivial. It was conjectured that $\vep(n,m)$ can be chosen to be one, as a map with $1$-dilation less than one is clearly homotopically trivial. Unbeknown to the authors of \cite{TW04}, the conjecture was affirmed by Corollary \ref{TW04} : a map $f: (S^n, \gstd) \to (S^m, \gstd) $ with $2$-dilation less than one is exactly an area-decreasing map. By contrast, such a result does not hold for the $3$-dilation by the work of Guth \cite{Gu13}, see stronger results about $k$-dilations therein.

 Area-decreasing maps in the mean curvature flow arise in a rather different context. It was shown in \cite{W03} that the Gauss maps of a mean curvature flow form a harmonic map heat flow, and area-decreasing maps correspond to a convex region in the Grassmannian. It was proved in \cite{TW04} that the area-decreasing condition is preserved by the mean curvature flow under curvature assumptions of the domain and the target manifolds (see \cite{W05} for the case when the target space is 2-dimensional). There were several later extensions \cites{LL11, SS14, ASS22} of the results in \cite{TW04} to allow more general curvature conditions, all based on the study of the evolution equation of the symmetric 2-tensor $\ST$ (see \cite{TW04}*{section 5}, or section \ref{sec_ST} of this article), whose positivity corresponds to the area-decreasing condition. In this article, we consider the evolution equation  \eqref{detS} of the logarithmic determinant of $\ST$. This novel formulation significantly improves our understanding of the ambient curvature terms involved in the evolution equation which leads to the sharp homotopic criteria mentioned in the title. In particular, unlike previous works whose assumptions rely on the sectional curvatures of the domain and target manifolds, Theorem \ref{thm_Ric_curv} relies on the Ricci curvatures. This enables us to prove Corollaries \ref{cproj0} and \ref{cproj1} regarding the complex projective spaces. 

It is best to describe the area-decreasing condition and the new monotone quantity in terms of singular values. Recall for a $C^1$ map $f:(\Sigma_1, g_1)\to (\Sigma_2, g_2)$ between Riemannian manifolds, the singular values of $\dd f$ at a point $p\in \Sigma_1$ are the non-negative square roots of eigenvalues of $(\dd f|_p)^T(\dd f|_p)$. Suppose $\Sigma_1$ is of dimension $n$ and the singular values are $\lambda_i, i=1,\cdots, n$, then the $2$-dilation of $f$ is defined to be 
\begin{align} \label{2-dilation}
\max_{i<j}\lambda_i\lambda_j ~.
\end{align}
A map is said to be \emph{area-decreasing} if the $2$-dilation is less than one everywhere, or 
\begin{align} \label{area decreasing}
\lambda_i\lambda_j<1 \quad\text{for any } i<j ~.
\end{align}

This article considers the mean curvature flow of the graph of $f$, $\Gm_f$, as a submanifold of $\Sigma_1\times \Sigma_2$. Under suitable curvature assumptions on $\Sigma_1$ and $\Sigma_2$ and the area-decreasing condition, the flow remains the graph of a family of maps $f_t$. The new monotone quantity is \begin{equation}\label{logdetS^2}\log\left( \prod_{1\leq i<j\leq n}\frac{2(1-\ld_i^2\ld_j^2)}{(1+\ld_i^2)(1+\ld_j^2)}\right),\end{equation} where $\lambda_i$ are the singular values of $f_t$.

We state the main theorems in the following.

\begin{thm} \label{thm_sec_curv}
Let $(\Sm_1,g_1)$ and $(\Sm_2,g_2)$ be two compact, Riemannian manifolds.  Let $n = \dim\Sm_1$, $m = \dim\Sm_2$, and assume $n\geq m\geq2$.  Suppose that their sectional curvatures satisfy
\begin{align*}
\scc_{1} \geq 1 \quad\text{and}\quad \scc_{2}<\frac{2n-m-1}{m-1} ~.
\end{align*}
Then, for any area-decreasing map $f:\Sm_1\to\Sm_2$, the mean curvature flow in $\Sm_1\times\Sm_2$ starting from $\Gm_f$ exists for all time, and remains graphical.  Moreover, it converges smoothly to the graph of a constant map as $t\to\infty$.
\end{thm}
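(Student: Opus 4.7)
The plan is to evolve $\Gm_f$ by mean curvature in $\Sm_1\times\Sm_2$, monitor the quantity \eqref{logdetS^2} along the flow via its evolution equation \eqref{detS}, and combine the resulting monotonicity with a maximum-principle argument to obtain long-time existence, preservation of graphicality, and convergence to the graph of a constant map.

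Short-time existence of the graphical mean curvature flow starting from the smooth compact graph $\Gm_f$ is standard, yielding a family of maps $f_t\colon\Sm_1\to\Sm_2$ on some interval $[0,T_0)$. The area-decreasing condition \eqref{area decreasing} is equivalent to the positive-definiteness of the symmetric $2$-tensor $\ST$ of section \ref{sec_ST}, equivalently to the finiteness of
\[
\Phi := \log\prod_{i<j}\frac{2(1-\ld_i^2\ld_j^2)}{(1+\ld_i^2)(1+\ld_j^2)},
\]
with $\Phi\to-\infty$ precisely when some $\ld_i\ld_j\to 1$. Using \eqref{detS}, I would decompose $(\heat)\Phi$ into (i) a reaction term from the Simons-type identity, consisting of a sum of squares in the second fundamental form with a favorable sign, and (ii) an ambient-curvature term which is an algebraic function of $\scc_1$, $\scc_2$, and the $\ld_i$. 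The crux of the proof — and what I expect to be the main technical obstacle — is verifying that, under the sharp hypothesis $\scc_1\geq 1$ and $\scc_2<(2n-m-1)/(m-1)$, term (ii) has the correct sign when paired with positive $\ST$. The threshold $(2n-m-1)/(m-1)$ should emerge from a counting argument: the positive contributions from $\scc_1$ are indexed over all $\binom{n}{2}$ pairs $(i,j)$, while the negative contributions from $\scc_2$ appear only over the $\binom{m}{2}$ pairs with nontrivial target directions. I would attempt this algebraic step by normalizing $\scc_1=1$, writing the ambient term as a symmetric polynomial in $\ld_i^2$, and exploiting the constraints $\ld_i\ld_j<1$ to bound it. Granted this, the weak maximum principle applied to $\Phi$ prevents it from reaching $-\infty$, preserving the area-decreasing condition throughout the maximal interval.

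With area-decreasing preserved, the projection $\Sm_1\times\Sm_2\to\Sm_1$ restricted to $\Gm_{f_t}$ remains a local diffeomorphism with uniformly controlled Jacobian, so the flow remains graphical. Standard Ecker--Huisken type estimates combined with the bounded tilt then bound the second fundamental form, giving long-time existence. For convergence, the strict monotonicity of \eqref{logdetS^2} provides integrated control on $|A|$, which upgrades to pointwise decay via the parabolic maximum principle applied along a decaying envelope; interior regularity then yields smooth subsequential convergence to a totally geodesic graph $\Gm_{f_\infty}$. On such a limit the equality case of the monotonicity analysis forces $\ST$ to saturate in a direction compatible with the curvature hypotheses, which in turn forces each $\ld_i\equiv 0$. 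This identifies $\Gm_{f_\infty}$ with a horizontal slice $\Sm_1\times\{q\}$, showing that $f_t$ converges to a constant map as $t\to\infty$.
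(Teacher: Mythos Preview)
Your overall strategy---monitor $\Phi=\log\det\ST$ via \eqref{detS}, show the ambient-curvature term $\CR_S$ is nonnegative under the hypotheses, and apply the maximum principle---is exactly the paper's. The threshold, however, does not emerge from the $\binom{n}{2}$ versus $\binom{m}{2}$ pair-count you sketch. The paper bounds $\CR_S$ from below by a sum over $i<j$ of $(S_{ii}+S_{jj})^{-1}$ times quantities $P_i=\sum_{k\neq i}\bigl[(1+S_{kk})-\tau(1-S_{kk})\bigr]$; using that $S_{kk}=1$ for $k>m$ one extracts the constant $(2n-m-1)-(m-1)\tau$ plus a remainder $(1+\tau)\bigl(-S_{ii}+\sum_{k\leq m}S_{kk}\bigr)$, which is automatically nonnegative when $m>2$ but requires a separate direct verification when $m=2$. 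This two-case split is the actual content of the ``crux'' step.

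There is a genuine gap in your long-time existence and convergence arguments. Ecker--Huisken interior estimates are codimension-one results and do not transfer to graphical mean curvature flow in higher codimension with curved target; the paper instead feeds the inequality $(\heat)\Phi\geq 2|A|^2$ into the blow-up argument of \cite{W02} and invokes White's regularity theorem to rule out finite-time singularities. For convergence, the paper does not pass to a subsequential totally-geodesic limit and analyse an equality case. It uses the \emph{strict} bound on $\sct$ to upgrade \eqref{detS} to $(\heat)\Phi\geq 2|A|^2 - c\,\Phi$ (via Lemma~\ref{est_vph}), so the maximum principle gives exponential decay $\Phi\to 0$ and hence $\ld_i\to 0$ directly; then Lemma~\ref{lem_grad_ST} yields $(\heat)e^{\Phi}\geq e^{\Phi}|A|^2$ for large $t$, from which $\max|A|^2\to 0$ follows as in \cite{LL11}. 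Your subsequential-limit route is more delicate (integrated control on $|A|$ does not by itself give pointwise decay along the whole flow) and is not what the paper does.
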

The condition $\scc_{1} \geq 1$ means that $\scc_{1}(\sm) \geq 1$ for any $\sm\in\Ld^2(T\Sm_1)$.

\begin{thm} \label{thm_Ric_curv}
Let $(\Sm_1,g_1)$ and $(\Sm_2,g_2)$ be two compact Riemannian manifolds.  Assume $\dim\Sm_1\geq\dim\Sm_2\geq2$.  Suppose that their Ricci and sectional curvatures satisfy
\begin{align*}
\frac{\Ric_{1}}{g_1} \geq \frac{\Ric_{2}}{g_2} \quad\text{and}\quad \scc_{1} + \scc_{2} > 0 ~.
\end{align*}
{When $\dim \Sm_1 > \dim\Sm_2$, we assume additionally that $\scc_{1} \geq 0$.}
Then, the same conclusion as Theorem \ref{thm_sec_curv} holds true.
\end{thm}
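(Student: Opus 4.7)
The plan is to establish the pointwise monotonicity of the logarithmic determinant of $\ST$ along the MCF via the parabolic maximum principle, and then argue as in \cite{TW04} that the resulting uniform bound preserves the graphical property, yields long-time existence, and forces convergence to a constant map. The only new ingredient beyond Theorem \ref{thm_sec_curv} is an improved treatment of the ambient-curvature term; under Ricci hypotheses the required positivity requires an additional trace, which is exactly what differentiation of $\log\det$ provides. Concretely, write the tensorial evolution as $(\pl_t-\Dt)\ST = \CQ + \CR$ with $\CQ$ quadratic in the second fundamental form and $\CR$ purely ambient; then
\begin{equation*}
(\pl_t-\Dt)\log\det\ST = \tr\bigl((\ST)^{-1}\CQ\bigr) + \tr\bigl((\ST)^{-1}\CR\bigr) + \tr\bigl((\ST)^{-1}\nabla\ST(\ST)^{-1}\nabla\ST\bigr)~.
\end{equation*}
The third term is manifestly non-negative, and a reworking of \cite{TW04}*{\S5} gives $\tr((\ST)^{-1}\CQ)\geq 0$ on the area-decreasing cone. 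So at a spatial minimum of $\log\det\ST$, the sign of the right-hand side reduces to the sign of the curvature trace $\tr((\ST)^{-1}\CR)$.

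The central computation is to show that $\tr((\ST)^{-1}\CR)\geq 0$ under the stated hypotheses. Diagonalize $\dd f$ in orthonormal frames $\{e_i\}$ on $\Sm_1$ with singular values $\ld_1,\ldots,\ld_n$, and take the frames $\{e'_\af\}$ on $\Sm_2$ aligned with the image of $\dd f$. In these frames $\ST$ is diagonal on the basis $\{e_i\w e_j\}$ of $\Ld^2 T\Sm_1$, with eigenvalues matching the factors in \eqref{logdetS^2}; the coefficients of $\CR$ are weighted combinations of sectional curvatures $\scd(e_i\w e_j)$ and $\sct(e'_\af\w e'_\bt)$. The weighted sum over pairs $i<j$ produced by $(\ST)^{-1}$ rearranges as follows: terms symmetric in one inner index sum to $\Rcd(e_i,e_i)$ and $\Rct(e'_\af,e'_\af)$, while the residual pointwise terms pair each $\scd(e_i\w e_j)$ with the corresponding $\sct(e'_\af\w e'_\bt)$. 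The hypothesis $\Rcd/g_1 \geq \Rct/g_2$ controls the Ricci contractions and $\scd+\sct>0$ controls the residual sectional pairings; together they give $\tr((\ST)^{-1}\CR)\geq 0$, strictly so unless every $\ld_i=0$.

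The remaining steps are standard. The maximum principle now yields a uniform bound of the form $\ld_i\ld_j\leq 1-\ep_0$ that preserves the area-decreasing condition with an explicit gap and so keeps the flow graphical; together with the parabolic estimates on the second fundamental form from \cite{TW04}, this gives smooth long-time existence. The pointwise monotonicity of \eqref{logdetS^2} and its upper bound imply its convergence as $t\to\infty$; parabolic compactness extracts a smooth limit $f_\infty$ on which $\tr((\ST)^{-1}\CR)\equiv 0$, forcing $\ld_i\equiv 0$ by the strict positivity above, so $f_\infty$ is a constant map, and standard interpolation upgrades convergence to the smooth topology. The main obstacle will be the algebraic regrouping in the middle paragraph: producing the precise identity that converts weighted sums of sectional curvatures into a Ricci trace plus a scalar sectional sum is delicate, and is exactly where the choice of $\log\det\ST$ — rather than $\ST$ itself, as in \cite{TW04,LL11,SS14,ASS22} — becomes essential.
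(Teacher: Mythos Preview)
Your outline matches the paper's strategy, and the regrouping of the curvature trace into a Ricci part plus residual sectional pairings is exactly what the paper carries out (writing the $k$-sum as $\Rcd(i,i)-\Rct(i,i)$ plus $\sum_k S_{kk}(\scd(i,k)+\sct(i,k))$, then symmetrizing the latter over triples $i<j<k$ to produce non-negative coefficients multiplying $\scd+\sct$). There is, however, a genuine gap in your handling of the second-fundamental-form terms. You treat the gradient term and $\tr((\ST)^{-1}\CQ)$ separately, claiming each is non-negative. The first is true; the second is not obvious and in any case too weak. The diagonal quadratic contribution to $(\partial_t-\Delta)(S_{ii}+S_{jj})$ contains terms $2h_{(n+i)ki}^2\,S_{ii}$, and in the area-decreasing regime one may have $S_{ii}<0$ (only pairwise sums $S_{ii}+S_{jj}$ are positive). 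The paper's key point is that one must \emph{combine} the quadratic piece with half of the diagonal gradient term: the identity
\[
2S_{ii}+(S_{ii}+S_{jj})^{-1}C_{ii}^2=(S_{ii}+S_{jj})+(S_{ii}+S_{jj})^{-1}C_{jj}^2
\]
converts the dangerous $2h_{(n+i)ki}^2 S_{ii}$ plus its gradient counterpart into $(S_{ii}+S_{jj})h_{(n+i)ki}^2$ plus a non-negative remainder, and after summation this yields $(\partial_t-\Delta)\log\det\ST\geq 2|A|^2+2\CR_S$. The $2|A|^2$ is essential, not cosmetic: long-time existence goes through the blow-up argument of \cite{W02} and White's regularity theorem, which consumes exactly this term. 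Your appeal to ``parabolic estimates from \cite{TW04}'' is circular, since those estimates rest on an inequality of this type.

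A smaller gap concerns convergence. The paper does not argue by compactness; it uses the strict lower bound $\CR_S\geq \tau\sum_{i<j}\frac{\ld_i^2+\ld_j^2}{2(1+\ld_i^2)(1+\ld_j^2)}$ to get $(\partial_t-\Delta)\Phi\geq -c\,\Phi$ for $\Phi=\log\det\ST-\tfrac{n(n-1)}{2}\log 2\leq 0$, hence exponential decay $\Phi\to 0$ and $\ld_i\to 0$; the $2|A|^2$ term is then used a second time to drive $\max|A|^2\to 0$. Your compactness route would require uniform higher-order bounds that you have not secured without the $|A|^2$ control above.
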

The first curvature condition means that $\Ric_{1}(\bu,\bu) \geq \Ric_{2}(\bv,\bv)$ for any unit vectors $\bu\in T\Sm_1$ and $\bv\in T\Sm_2$.  The second curvature condition means that $\scc_1(\sm_1) + \scc_2(\sm_2) > 0$ for any $\sm_1\in\Ld^2(T\Sm_1)$ and $\sm_2\in\Ld^2(T\Sm_2)$.

We state some corollaries of homotopic criteria which follow from the above theorems. Consider the complex projective spaces endowed with the Fubini--Study metrics, $\gFS$.  Our theorems imply the following results:
\begin{cor} \label{cproj0}
Suppose that $n\geq m\geq 1$. Any area-decreasing map from $(\BCP^n,\gFS)$ to $(\BCP^m,\gFS)$ must be homotopically trivial.
\end{cor}

 It is known that the homotopy class of a map in $[\BCP^n, \BCP^m]$ is determined by the induced map on the cohomology groups \cite{Mo84} for $n<m$, and by the degree for $n=m$ (see \cite{McGibbon82}). Since the identity map is not homotopically trivial, the area-decreasing condition in Corollary \ref{cproj0} is sharp.

Another corollary concerns $\pi_{2n+1}(\BCP^n)$, which is isomorphic to $\pi_{2n+1}(S^{2n+1})\cong\BZ$ by the long exact sequence induced by the Hopf fibration $S^1\to S^{2n+1}\to\BCP^n$.

\begin{cor} \label{cproj1}
Any smooth map $f: (S^{2n+1},\gstd) \to (\BCP^n,\gFS)$ with 2-dilation less than  $\frac{2n}{2n+1}$ everywhere must be homotopically trivial.
\end{cor}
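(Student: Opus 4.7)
The approach is to reduce to the sphere-to-sphere setting via the Hopf fibration $\pi:S^{2n+1}\to\BCP^n$, where Theorem \ref{thm_Ric_curv} applies directly: on $(S^{2n+1},\gstd)$ one has $\Ric/g=2n$ and constant sectional curvature $1$, so the Ricci comparison holds with equality and $\scc_1+\scc_2=2>0$. Since $H^2(S^{2n+1};\BZ)=0$ for $n\geq 1$, the pullback circle bundle $f^{*}S^{2n+1}\to S^{2n+1}$ is trivial, so $f$ admits a lift $\tilde f:S^{2n+1}\to S^{2n+1}$ with $\pi\circ\tilde f=f$. Any two lifts differ by a map $S^{2n+1}\to S^1$, which is null-homotopic since $S^{2n+1}$ is simply connected; hence all lifts are homotopic, and $f$ is null-homotopic if and only if $\tilde f$ is.

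The plan is then to choose a lift $\tilde f$ wisely, verify it is area-decreasing, and invoke Theorem \ref{thm_Ric_curv} to contract $\Gm_{\tilde f}\subset S^{2n+1}\times S^{2n+1}$ via mean curvature flow to the graph of a constant map. The key step is the pointwise inequality: the hypothesis $\lambda_1\lambda_2<\frac{2n}{2n+1}$ on the top two singular values of $f$ should imply $\tilde\lambda_1\tilde\lambda_2<1$ for the corresponding singular values of $\tilde f$. Decomposing $\dd\tilde f(v)=H(v)+b(v)V$, where $V$ is the unit Hopf vertical at $\tilde f$ and $H(v)$ is horizontal, the Riemannian submersion identifies $H$ with $\dd f$, so $(\dd\tilde f)^T\dd\tilde f=(\dd f)^T\dd f+b\ot b$, a rank-one positive perturbation. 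The 1-form $b$ depends on the choice of lift and satisfies $\dd b=f^{*}\om_{\mathrm{FS}}$, coming from the Hopf connection being the pullback of the Fubini--Study Kähler form. Gauge-fixing to a coclosed $b$ (possible since $H^1(S^{2n+1})=0$) together with Weyl-type interlacing for rank-one perturbations yields an upper bound on $\tilde\lambda_1\tilde\lambda_2$ in terms of $\lambda_1,\lambda_2$, and $|b|$; the threshold $\frac{2n}{2n+1}$ should emerge from tracking the dimension factors on the $(2n+1)$-sphere, the denominator reflecting that the lift redistributes the rank-$2n$ tensor $(\dd f)^T\dd f$ across the full tangent space of the target sphere.

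The main obstacle is obtaining pointwise control of $b$: Hodge theory on $S^{2n+1}$ only produces $L^p$ bounds on $b$ from bounds on $\dd b$, so sharpening these to a pointwise estimate matched to the area-decreasing condition requires exploiting the $S^1$-homogeneity of $S^{2n+1}$ and the invariance of the Hopf connection. If the direct Hopf-lift route proves too delicate, an alternative is to work on $\Gm_f\subset S^{2n+1}\times\BCP^n$ with the target metric rescaled to $\frac{2n+1}{2n}\gFS$---where the area-decreasing threshold becomes exactly $\frac{2n}{2n+1}$---and absorb the resulting small Ricci deficit using the new monotone quantity \eqref{logdetS^2} together with the Kähler structure of $\BCP^n$ in the evolution equation.
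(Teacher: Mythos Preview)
Your primary route through the Hopf lift has a genuine gap that you yourself flag but cannot close. The equation $\dd b = f^*\om_{\mathrm{FS}}$ gives only integral control on $b$ via Hodge theory; neither the coclosed gauge nor the $S^1$-homogeneity of the Hopf connection upgrades this to a pointwise bound. Without such a bound the rank-one perturbation $(\dd f)^T\dd f + b\otimes b$ can have its top two eigenvalues multiply to something $\geq 1$ even when the $2$-dilation of $f$ is arbitrarily small: interlacing for a positive rank-one perturbation only yields $\tilde\ld_1^{\,2}\leq \ld_1^{\,2} + |b|^2$ and $\tilde\ld_2^{\,2}\leq \ld_1^{\,2}$, which says nothing without control of $|b|$, and the constant $\tfrac{2n}{2n+1}$ plays no role in that estimate. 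This approach does not reach the goal.

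The paper's proof is precisely your ``alternative'', executed in a few lines with no further analysis. One rescales the target to $(\BCP^n,\rho^2\gFS)$ with $\rho^2=\tfrac{2n+1}{2n}$, so that the hypothesis on the $2$-dilation makes $f$ area-decreasing for the rescaled metric; both $(S^{2n+1},\gstd)$ and $(\BCP^n,\rho^2\gFS)$ are Einstein, and the paper records that after rescaling the Einstein constants coincide (both equal to $2n$), so the Ricci comparison of Theorem~\ref{thm_Ric_curv} holds, while $\scc_1+\scc_2>0$ is immediate since the sectional curvatures of both factors are positive. Theorem~\ref{thm_Ric_curv} then applies directly to $\Gm_f\subset S^{2n+1}\times\BCP^n$: there is no deficit to absorb, no separate appeal to the K\"ahler structure, and no use of the evolution inequality beyond what is already packaged in Theorem~\ref{thm_Ric_curv}. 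Drop the lift and go straight to the rescaling.
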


The Hopf fibration $S^{2n+1}\to\BCP^n$ represents the generator of $\pi_{2n+1}(\BCP^n)\cong\BZ$.  It is a straightforward computation to show that the Hopf fibration $(S^{2n+1},\gstd)\to(\BCP^n,\gFS)$ has singular values $0$ (of multiplicity $1$) and $1$ (of multiplicity $2n$).  With this understood, Corollary \ref{cproj1} is sharp as $n\to\infty$.

The paper is organized as follows.  In section \ref{sec_pre}, we review some properties of graphical mean curvature flow.  In section \ref{sec_ST}, the evolution equation of the logarithmic determinant of $\ST$ is derived.  Section \ref{sec_mainthm} is devoted to prove the main theorems, Theorem \ref{thm_sec_curv} and \ref{thm_Ric_curv}.  In section \ref{sec_cor}, we explain the implications of the main theorems.

\begin{ack*}
    The authors thank Man-Chun Lee and Luen-Fai Tam for suggestions on the earlier draft of this paper.
\end{ack*}

\section{Preliminaries} \label{sec_pre}

For two Riemannian manifolds $(\Sm_1,g_1)$ and $(\Sm_2,g_2)$, consider the symmetric $2$-tensor $S$ on $\Sm_1\times\Sm_2$ defined by
\begin{align*}
S(X,Y) &= \ip{\pi_1(X)}{\pi_1(Y)} - \ip{\pi_2(X)}{\pi_2(Y)}
\end{align*} 
for any $X,Y\in T(\Sm_1\times\Sm_2)$. It is not hard to see that $S$ is parallel with respect to the Levi-Civita connection of the product metric.  Let $n=\dim\Sm_1$ and $m=\dim\Sm_2$.

For a map $f:\Sm_1\to\Sm_2$, let $\Gm_f$ be the graph of $f$.  As in \cite{TW04}*{section 4}, one can apply the singular value decomposition at any $p\in\Sm_1$ to $\dd f|_p$ to find orthonormal bases $\{\be_i\}_{i=1}^n$ for $T_{(p,f(p))}\Gm_f$, and $\{\be_\af\}_{\af = n+1}^{n+m}$ for $(T_{(p,f(p))}\Gm_f)^\perp$ such that
\begin{align}
S(\be_i,\be_j) &= \frac{1-\ld_i^2}{1+\ld_i^2}\dt_{ij} ~, &S(\be_i,\be_{\af}) &= \frac{-2\ld_i}{1+\ld_i^2}\dt_{i(\af-n)} ~, &S(\be_\af,\be_\bt) &= \frac{-(1-\ld_{\af-n}^2)}{1+\ld_{\af-n}^2}\dt_{\af\bt} ~,
\label{SVD0} \end{align}
where $\{\ld_i\}_{i=1}^n$ are the singular values of $\dd f|_p$.  Indeed, there exist orthonormal bases $\{\bu_i\}_{i=1}^n$ for $T_p\Sm_1$ and $\{\bv_j\}_{j=1}^m$ for $T_{f(p)}\Sm_2$ such that
\begin{align} \begin{split}
\be_i &= \frac{1}{\sqrt{1+\ld_i^2}}(\bu_i + \ld_i\bv_i)  \qquad\text{for }i\in\{1,\ldots,n\} ~, \\
\be_\af &= \frac{1}{\sqrt{1+\ld_{\af-n}^2}}(-\ld_{\af-n}\bu_{\af-n} + \bv_{\af-n})  \qquad\text{for }\af\in\{n+1,\ldots,n+m\} ~.
\end{split} \label{basis} \end{align}
It is convenient to set $\ld_i$ to be $0$ when $i>\min\{n,m\}$.

Suppose that $\{\Gm_{f_t}\}$ is a mean curvature flow.  Let $F_t$ be the embedding
\[x\in\Sm_1 \to (x,f_t(x))\in\Sm_1\times\Sm_2 ~.\]
Consider the tensor $F_t^*S$ on $\Sm_1$, and denote its components by $S_{ij}$.  From \cite{TW04}*{(3.7)}, 
$F_t^*S$ obeys the following equation along the mean curvature flow:
\begin{align} \begin{split}
(\heat)S_{ij} &= R_{kik\af}S_{\af j} + R_{kjk\af}S_{\af i} \\
&\quad + h_{\af k\ell}h_{\af ki}S_{\ell j} + h_{\af k\ell}h_{\af kj}S_{\ell i} - 2h_{\af ki}h_{\bt kj}S_{\af\bt} ~,
\end{split} \label{evol_S} \end{align}
which is in terms of an evolving orthonormal frame.  Here, the Laplacian $\Dt$ is the rough Laplacian for $2$-tensors of $\Gm_{f(t)}$, and $R_{kik\af} = \ip{R(\be_k,\be_\af)\be_i}{\be_k}$ is the coefficient of the Riemann curvature tensor of $\Sm_1\times\Sm_2$.

The (spatial) gradient of $F_t^*S$ is computed on \cite{TW04}*{p.1121} as follows:
\begin{align}
S_{ij;k} &= h_{\af ki}S_{\af j} + h_{\af kj}S_{\af i} ~,
\label{grad_S} \end{align}
where $h_{\af ki} = \ip{\br{\nabla}_{\be_k}\be_i}{\be_\af}$ is the coefficient of the second fundamental form of $\Gm_{f(t)}$ in $\Sigma_1\times \Sigma_2$.

In terms of the frame $\{\be_i\}_{i=1}^n$ \eqref{SVD0}, $F_t^*S$ is diagonal, and
\begin{align*}
(S_{ii} + S_{jj}) & = \frac{2(1-\ld_i^2\ld_j^2)}{(1+\ld_i^2)(1+\ld_j^2)} ~. 
\end{align*}
It follows that the $2$-positivity\footnote{The sum of any two eigenvalues is positive.} of $F_t^*S$ is equivalent to  $f(t)$ being area-decreasing.

\section{The Evolution Equation of $\log(\det\ST)$} \label{sec_ST}

In \cite{TW04}*{section 5}, the authors introduce a tensor $\ST$ from $F_t^*S$, which can be viewed as a symmetric endomorphism on $\Ld^2(T^*\Gm_f)$.  With respect to an orthonormal frame,
\begin{align}
\ST_{(ij)(k\ell)} &= S_{ik}\dt_{j\ell} + S_{j\ell}\dt_{ik} - S_{i\ell}\dt_{jk} - S_{jk}\dt_{i\ell} \label{def_S2}
\end{align}
for any $i<j$ and $k<\ell$.

Suppose that at a space-time point $p$, $S$ is diagonal, $S_{ij}|_p = S_{ii}\dt_{ij}$.  As in \eqref{SVD0}, at this point
\begin{align}
S_{ii} = \frac{1-\ld_i^2}{1+\ld_i^2} ~, ~\text{ and set }~ C_{ii} = \frac{2\ld_i}{1+\ld_i^2} ~.
\label{Cii} \end{align}
Note that $S_{ii}^2 + C_{ii}^2 = 1$.  The fact that $\ST$ is positive is equivalent to $S_{ii}+S_{jj}>0$ for $i< j$. We need the following calculation lemma for the evolution of $S_{ii}+S_{jj}$, $i< j$.

\begin{lem} \label{lem_Sij}
Suppose that $\ST$ is positive definite.  For any $i,j$ with $1\leq i<j\leq n$,
\begin{align} \begin{split}
&\quad (S_{ii}+S_{jj})^{-1}(\heat)(S_{ii}+S_{jj}) + \oh(S_{ii}+S_{jj})^{-2}|\nabla(S_{ii}+S_{jj})|^2 \\
&\geq 2\sum_{k}\left[h_{(n+j)ki}^2+h_{(n+i)kj}^2+h_{(n+i)ki}^2+h_{(n+j)kj}^2+\sum_{\ell>n}\left(h_{(n+\ell)ki}^2 + h_{(n+\ell)kj}^2\right) \right]\\
&\quad - 2(S_{ii}+S_{jj})^{-1}\sum_{k} \left[ R_{kik(n+i)}C_{ii} + R_{kjk(n+j)}C_{jj} \right] \\
&\quad + 2(S_{ii}+S_{jj})^{-2}\sum_k \left(h_{(n+i)ki}C_{jj} + h_{(n+j)kj}C_{ii}\right)^2 ~.
\end{split} \label{lndetSij0} \end{align}
In the above expression, the summation index $k$ is from $1$ to $n$. 
\end{lem}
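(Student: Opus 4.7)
The plan is to compute both sides of \eqref{lndetSij0} directly at a fixed space-time point in the SVD frame \eqref{SVD0}, where $F_t^{*}S$ is diagonal with the explicit mixed components $S_{\af i} = -C_{ii}\dt_{\af, n+i}$ and normal components $S_{\af\bt} = -S_{(\af-n)(\af-n)}\dt_{\af\bt}$. The heart of the argument is a Kato-type cancellation driven by the normalization $S_{ii}^{2} + C_{ii}^{2} = 1$: retaining only \emph{half} of $u^{-2}|\nabla u|^{2}$ on the left-hand side, where $u := S_{ii}+S_{jj}$, is tuned precisely so that the natural gradient square $(h_{(n+i)ki}C_{ii} + h_{(n+j)kj}C_{jj})^{2}$ arising from \eqref{grad_S} rearranges into the swapped expression $(h_{(n+i)ki}C_{jj} + h_{(n+j)kj}C_{ii})^{2}$ appearing on the right.

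First I specialize \eqref{evol_S} to $i=j$ and plug in the diagonal values. The normal-sector term $-2h_{\af ki}h_{\bt ki}S_{\af\bt}$ collapses to $+2 S_{\ell\ell} h_{(n+\ell)ki}^{2}$, and together with the tangent-sector contribution $+2 S_{ii} h_{\af ki}^{2}$, one obtains
\[
(\heat)S_{ii} = -2 R_{kik(n+i)} C_{ii} + 2 \sum_{k,\ell} (S_{ii}+S_{\ell\ell})\, h_{(n+\ell)ki}^{2}.
\]
Summing with the analogous formula for $j$, and using $S_{ii;k} = -2 h_{(n+i)ki} C_{ii}$ from \eqref{grad_S} to obtain $|\nabla(S_{ii}+S_{jj})|^{2} = 4\sum_{k}(h_{(n+i)ki}C_{ii} + h_{(n+j)kj}C_{jj})^{2}$, I divide through by $u>0$ (positive by the $2$-positivity of $\ST$) and add $\oh u^{-2}|\nabla u|^{2}$ to assemble the left-hand side. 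The curvature contributions then match their counterparts on the right immediately.

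The crucial algebraic step isolates two discrepancies. The $\ell = i, j$ Laplacian contributions yield coefficients $2S_{ii}/u$ and $2S_{jj}/u$ on $h_{(n+i)ki}^{2}$ and $h_{(n+j)kj}^{2}$ respectively, deviating from the target coefficient $1$ by $u^{-1}(S_{ii}-S_{jj})(h_{(n+i)ki}^{2} - h_{(n+j)kj}^{2})$. On the gradient side,
\[
(h_{(n+i)ki}C_{ii} + h_{(n+j)kj}C_{jj})^{2} - (h_{(n+i)ki}C_{jj} + h_{(n+j)kj}C_{ii})^{2} = (C_{ii}^{2} - C_{jj}^{2})(h_{(n+i)ki}^{2} - h_{(n+j)kj}^{2}),
\]
and the relation $C_{ii}^{2} - C_{jj}^{2} = S_{jj}^{2} - S_{ii}^{2} = -u(S_{ii}-S_{jj})$, a direct consequence of $S_{ii}^{2} + C_{ii}^{2} = 1$, forces these two discrepancies to cancel exactly once the squared gradient is weighted by $u^{-2}$. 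The remaining $\ell \ne i, j$ Laplacian terms retain the positive weights $(S_{ii}+S_{\ell\ell})/u$ and $(S_{jj}+S_{\ell\ell})/u$ supplied by $2$-positivity, and supply the remaining quadratic form on the right. The main obstacle is orchestrating this Kato-type cancellation — the half-gradient coefficient on the left and the swapped $C$-pair on the right are dictated purely by $S_{ii}^{2} + C_{ii}^{2} = 1$, and every other term in \eqref{lndetSij0} falls into place after it. Index-bookkeeping for the truncation $C_{\ell\ell} = 0$, $\ld_\ell = 0$ when $\ell > m$ is routine since the corresponding normal direction is simply absent.
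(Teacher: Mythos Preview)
Your proposal is correct and follows essentially the same route as the paper: both compute $(\heat)S_{ii}$ and $|\nabla(S_{ii}+S_{jj})|^2$ in the SVD frame and reduce everything to the algebraic identity $2S_{ii}+u^{-1}C_{ii}^2 = u + u^{-1}C_{jj}^2$ (your difference-of-squares cancellation via $S_{ii}^2+C_{ii}^2=1$ is exactly this), then invoke $2$-positivity to drop the $1\le\ell\le n$, $\ell\notin\{i,j\}$ contributions and bound the $\ell>n$ ones by $S_{\ell\ell}=1$. Two small bookkeeping points to tighten when you write it out: the cross terms $\ell=j$ in the $i$-sum and $\ell=i$ in the $j$-sum (which produce $h_{(n+j)ki}^2+h_{(n+i)kj}^2$ with weight exactly $u$) are not captured by your ``remaining $\ell\ne i,j$'' clause, and your displayed coefficients $2S_{ii}/u$, target $1$, and discrepancy $u^{-1}(S_{ii}-S_{jj})(\cdots)$ are each off by a uniform factor of $2$ --- the cancellation still goes through verbatim once this is corrected.
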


\begin{proof}
By \eqref{evol_S} and \eqref{SVD0},
\begin{align*}
(\heat)S_{ii} &= 2\sum_{k,\af} R_{kik\af}S_{\af i} + 2\sum_{\af,k}h_{\af ki}^2S_{ii} - 2\sum_{\af,k}h_{\af ki}^2S_{\af\af} \\
&= -2\sum_{k} R_{kik(n+i)}C_{ii} + 2\sum_{k,\ell}h_{(n+\ell)ki}^2S_{ii} + 2\sum_{k,\ell}h_{(n+\ell)ki}^2S_{\ell\ell}
\end{align*}
where $k$ is from $1$ to $n$, and $\ell$ is from $1$ to $m$.
It follows that
\begin{align*}
(\heat)(S_{ii}+S_{jj}) &= -2\sum_{k} \left[ R_{kik(n+i)}C_{ii} + R_{kjk(n+j)}C_{jj} \right] \\
&\quad + 2\sum_{k,\ell}h_{(n+\ell)ki}^2(S_{ii}+S_{\ell\ell}) + 2\sum_{k,\ell}h_{(n+\ell)kj}^2(S_{jj}+S_{\ell\ell}) ~.
\end{align*}

On the other hand, by \eqref{grad_S} and \eqref{SVD0},
\begin{align}
|\nabla(S_{ii}+S_{jj})|^2 &= 4\sum_k\left(h_{(n+i)ki}C_{ii} + h_{(n+j)kj}C_{jj}\right)^2 ~. \label{nabla_Sij}
\end{align}

Therefore, 
\begin{align} \begin{split}
&\quad (S_{ii}+S_{jj})^{-1}(\heat)(S_{ii}+S_{jj}) + \oh(S_{ii}+S_{jj})^{-2}|\nabla(S_{ii}+S_{jj})|^2 \\
&=\quad - 2(S_{ii}+S_{jj})^{-1}\sum_{k} \left[ R_{kik(n+i)}C_{ii} + R_{kjk(n+j)}C_{jj} \right]+2(S_{ii}+S_{jj})^{-1}\left[ \mbox{I}+\mbox{II}\right] ~,
\end{split} \label{lndetSij} \end{align}
where 
\begin{align*}
    \mbox{I} &= \sum_{k,\ell}h_{(n+\ell)ki}^2(S_{ii}+S_{\ell\ell}) + \sum_{k,\ell}h_{(n+\ell)kj}^2(S_{jj}+S_{\ell\ell}) ~,
\end{align*}
and
\begin{align*}
    \mbox{II} &=(S_{ii}+S_{jj})^{-1}\sum_k\left[h_{(n+i)ki}C_{ii} + h_{(n+j)kj}C_{jj}\right]^2 ~.
\end{align*}

We claim that 
\begin{align}\begin{split}
\mbox{I}+\mbox{II} &\geq  (S_{ii}+S_{jj})\sum_{k}\left[h_{(n+j)ki}^2+h_{(n+i)kj}^2+h_{(n+i)ki}^2+h_{(n+j)kj}^2+\sum_{\ell>n}\left(h_{(n+\ell)ki}^2 + h_{(n+\ell)kj}^2\right) \right]\\
&\quad + (S_{ii}+S_{jj})^{-1}\sum_k \left(h_{(n+i)ki}C_{jj} + h_{(n+j)kj}C_{ii}\right)^2,  \end{split} \label{I+II}\end{align} which proves the lemma. 

By taking out $\ell=i,j$, the term $\mbox{I}$ can be rewritten as
\begin{align}
\begin{split}\mbox{I} &= \mbox{I}'  + \sum_k\left[h_{(n+j)ki}^2 + h_{(n+i)kj}^2\right](S_{ii} + S_{jj}) \\
&\quad + \sum_k\sum_{\ell\neq\{i,j\}}h_{(n+\ell)ki}^2(S_{ii}+S_{\ell\ell}) + \sum_k\sum_{\ell\neq\{i,j\}}h_{(n+\ell)kj}^2(S_{jj}+S_{\ell\ell}) ~, \end{split} \label{Sij1}
\end{align} where
\begin{align*}
    \mbox{I}' &= 2\sum_{k} \left[h_{(n+i)ki}^2S_{ii} + h_{(n+j)kj}^2S_{jj} \right] ~.
\end{align*}

If $m>n$, consider the $\ell>n$ terms in the last two summands (both are non-negative due to our assumption)  in \eqref{Sij1}.  When $\ell>n$, $S_{\ell\ell} = 1$.  It follows that
\[ S_{ii}+S_{\ell\ell}\geq S_{ii}+S_{jj} \quad\text{and}\quad S_{jj}+S_{\ell\ell}\geq S_{ii}+S_{jj} ~. \]
Then
\begin{align*}
&\quad \sum_k\sum_{\ell>n}h_{(n+\ell)ki}^2(S_{ii}+S_{\ell\ell}) + \sum_k\sum_{\ell>n}h_{(n+\ell)kj}^2(S_{jj}+S_{\ell\ell}) \\
&\geq (S_{ii} + S_{jj})\sum_k\sum_{\ell>n} \left[h_{(n+\ell)ki}^2 + h_{(n+\ell)kj}^2\right] ~.
\end{align*}

Therefore, 
\begin{align*}
\mbox{I} &\geq \mbox{I}'  + (S_{ii} + S_{jj}) \sum_k\left[h_{(n+j)ki}^2 + h_{(n+i)kj}^2+\sum_{\ell>n} (h_{(n+\ell)ki}^2 + h_{(n+\ell)kj}^2)\right] ~.
\end{align*}

Grouping $\mbox{I}'$ and $\mbox{II}$ yields
\begin{align*}
&\quad \mbox{I}'+\mbox{II}\\
&= 2\sum_{k} \left[h_{(n+i)ki}^2S_{ii} + h_{(n+j)kj}^2S_{jj}\right] +(S_{ii}+S_{jj})^{-1}\sum_k\left[h_{(n+i)ki}C_{ii} + h_{(n+j)kj}C_{jj}\right]^2\\
&= (S_{ii}+S_{jj}) \sum_{k}\left[h_{(n+i)ki}^2 + h_{(n+j)kj}^2 \right] + (S_{ii}+S_{jj})^{-1}\sum_k\left[h_{(n+i)ki}C_{jj} + h_{(n+j)kj}C_{ii}\right]^2,
\end{align*}
where the last equality follows from the identity
\begin{align*}
    2S_{ii}+(S_{ii}+S_{jj})^{-1}C_{ii}^2= (S_{ii}+S_{jj})+(S_{ii}+S_{jj})^{-1} C_{jj}^2 ~. 
\end{align*}
Putting these together proves claim \eqref{I+II}.
\end{proof}


We are now ready to derive the evolution equation of $\log(\det\ST)$.

\begin{thm} \label{thm_detS}
Suppose that $\ST$ is positive definite.  The function $\log(\det\ST)$ satisfies
\begin{align}
(\heat)\log(\det\ST) &\geq 2|A|^2 + 2(n-2)\sum_{1\leq i\leq n}\sum_{1\leq k\leq n} h_{(n+i)ki}^2 + 2\CR_S + 2\CQ_S
\label{detS} \end{align}
where
\begin{align}
\CR_S = & - \sum_{1\leq i<j\leq n} (S_{ii}+S_{jj})^{-1}\sum_{1\leq k\leq n} \left[ R_{kik(n+i)}C_{ii} + R_{kjk(n+j)}C_{jj} \right]
\label{RS} \end{align}
and
\begin{align} \begin{split}
\CQ_S = & \sum_{1\leq i<j\leq n}(S_{ii}+S_{jj})^{-2}\sum_{1\leq k\leq n} \left(h_{(n+i)ki}C_{ii} + h_{(n+j)kj}C_{jj}\right)^2 \\
+& \sum_{1\leq i<j\leq n}(S_{ii}+S_{jj})^{-2}\sum_{1\leq k\leq n} \left(h_{(n+i)ki}C_{jj} + h_{(n+j)kj}C_{ii}\right)^2  ~.
\end{split} \label{QS} \end{align}
In particular, $\CQ_S\geq0$.
\end{thm}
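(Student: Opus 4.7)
The plan is to deduce the estimate from Lemma \ref{lem_Sij} by rewriting $(\heat)\log(\det\ST)$ in terms of the pairwise quantities treated there, then tallying up the second-fundamental-form contributions.

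At any space-time point $p$, choose the frame \eqref{SVD0} so that $F_t^*S$ is diagonal. Since $\ST$ depends linearly on $S$ through \eqref{def_S2}, it is simultaneously diagonal at $p$ with eigenvalues $\ST_{(ij)(ij)}=S_{ii}+S_{jj}$ for $i<j$. The standard log-determinant identity
\begin{align*}
(\heat)\log(\det\ST)=\tr\bigl(\ST^{-1}(\heat)\ST\bigr)+\tr\bigl(\ST^{-1}\nabla\ST\,\ST^{-1}\nabla\ST\bigr),
\end{align*}
combined with the positive-definiteness of $\ST$ (which makes the off-diagonal contributions to the quadratic form $\tr(\ST^{-1}\nabla\ST\,\ST^{-1}\nabla\ST)$ nonnegative), gives at $p$ the lower bound
\begin{align*}
(\heat)\log(\det\ST)\;\geq\;\sum_{i<j}\left[\frac{(\heat)(S_{ii}+S_{jj})}{S_{ii}+S_{jj}}+\frac{|\nabla(S_{ii}+S_{jj})|^2}{(S_{ii}+S_{jj})^2}\right].
\end{align*}

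Split the coefficient of the gradient-squared as $1=\oh+\oh$. The first half, together with $(\heat)(S_{ii}+S_{jj})/(S_{ii}+S_{jj})$, is exactly the combination that Lemma \ref{lem_Sij} bounds from below. Summing the lemma over pairs $i<j$: the curvature piece becomes $2\CR_S$; the trailing squared term yields the second sum in $2\CQ_S$; and the remaining $\oh(S_{ii}+S_{jj})^{-2}|\nabla(S_{ii}+S_{jj})|^2$, evaluated via \eqref{nabla_Sij}, produces the first sum in $2\CQ_S$. Everything is therefore accounted for except the pure second-fundamental-form block
\begin{align*}
2\sum_{i<j}\sum_k\Bigl[h_{(n+j)ki}^2+h_{(n+i)kj}^2+h_{(n+i)ki}^2+h_{(n+j)kj}^2+\sum_{\ell>n}(h_{(n+\ell)ki}^2+h_{(n+\ell)kj}^2)\Bigr]
\end{align*}
coming from the lemma, which must dominate $2|A|^2+2(n-2)\sum_{i,k}h_{(n+i)ki}^2$.

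The main obstacle is this last comparison, which I handle by a direct count. Fix a component $h_{(n+a)kb}^2$ and count the pairs $(i,j)$ with $i<j$ that contribute to it: a tangent-diagonal entry $h_{(n+a)ka}^2$ with $a\le\min(n,m)$ picks up coefficient $(n-a)+(a-1)=n-1$, coming from pairs $(a,j)$ via $h_{(n+i)ki}^2$ and from pairs $(i,a)$ via $h_{(n+j)kj}^2$; a tangent off-diagonal entry $h_{(n+a)kb}^2$ with $a\neq b$ and $a\le n$ is counted exactly once; and, when $m>n$, each extra entry with $a>n$ is counted $n-1$ times via the $\ell>n$ sum. Matching these against $|A|^2+(n-2)\sum h_{(n+a)ka}^2$, which distributes coefficients $n-1$, $1$, and $1$ to the three groups, the inequality follows for $n\ge2$ (with a nonnegative surplus when $m>n$). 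The assertion $\CQ_S\ge 0$ is immediate from its expression as a sum of squares.
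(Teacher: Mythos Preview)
Your proof is correct and follows essentially the same route as the paper's: the same log-determinant identity, the same discarding of the nonnegative off-diagonal gradient terms, the same $\oh+\oh$ split feeding one half into Lemma~\ref{lem_Sij} and the other into \eqref{nabla_Sij}, and the same multiplicity count on the second-fundamental-form terms. The only cosmetic difference is that the paper writes out the off-diagonal gradient contribution explicitly as $2\sum_i\sum_{j<k,\,j\neq i,\,k\neq i}(S_{ii}+S_{jj})^{-1}(S_{ii}+S_{kk})^{-1}|\nabla S_{jk}|^2$ before dropping it, whereas you appeal directly to positivity of $\tr(\ST^{-1}\nabla\ST\,\ST^{-1}\nabla\ST)$.
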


\begin{proof}
Suppose that $\ST$ is positive definite, and denote its inverse by $Q$.  Note that $Q^{(ij)(k\ell)}|_p = (S_{ii} + S_{jj})^{-1}\dt^{ik}\dt^{j\ell}$.  Consider the function $\log \det(\ST)$.  At $p$, this quantity is
\[ \sum_{1\leq i<j\leq n}\log(S_{ii}+S_{jj}) ~. \]
For brevity, write $A,B$ for the indices of $\ST$.  We compute
\begin{align}
(\heat)\log(\det \ST)
&= Q^{AB}\left[(\frac{\dd}{\dd t}-\Dt)\ST_{AB}\right] - (\nabla_\ell Q^{AB})(\nabla_\ell \ST_{AB}) \notag \\
&= Q^{AB}\left[(\frac{\dd}{\dd t}-\Dt)\ST_{AB}\right] + Q^{AC}(\nabla_\ell\ST_{CD})Q^{DB}(\nabla_\ell\ST_{AB}) \notag \\
&\stackrel{\text{at }p}{=} \sum_A Q^{AA}\left[(\frac{\dd}{\dd t}-\Dt)\ST_{AA}\right] + \sum_{A,B} Q^{AA}Q^{BB} \left|\nabla\ST_{AB}\right|^2 ~. \label{lndet01}
\end{align}
The first term on the last line of \eqref{lndet01} is
\begin{align}
\sum_{1\leq i<j\leq n}(S_{ii}+S_{jj})^{-1}(\frac{\dd}{\dd t}-\Dt)(S_{ii}+S_{jj}) ~. \label{lndet02}
\end{align}
For the second term, it follows from definition \eqref{def_S2} that $\ST_{(ij)(k\ell)} \equiv 0$ when $i,j,k,\ell$ are all distinct.  If $i,j,k$ are all distinct, $\ST_{AB} = \pm S_{k\ell}$ for $A = (ij)$ or $(ji)$, and $B = (ik)$ or $(ki)$.  With this understood, the second term of \eqref{lndet01} becomes
\begin{align}
\sum_{1\leq i<j\leq n} (S_{ii}+S_{jj})^{-2}|\nabla(S_{ii}+S_{jj})|^2 + 2\sum_{1\leq i\leq n}\sum_{\substack{1\leq j<k\leq n\\j\neq i,\,k\neq i}}(S_{ii}+S_{jj})^{-1}(S_{ii}+S_{kk})^{-1}|\nabla{S_{jk}}|^2 ~. \label{lndet03}
\end{align}
Since the last summation is taken over $j<k$, there is a coefficient $2$.

With \eqref{lndet01}, \eqref{lndet02} and \eqref{lndet03}, one finds that
\begin{align*}
&\quad (\heat)\log(\det\ST) \\
&\geq \sum_{1\leq i<j\leq n} \left[ (S_{ii}+S_{jj})^{-1}(\heat)(S_{ii}+S_{jj}) + \oh(S_{ii}+S_{jj})^{-2}|\nabla(S_{ii}+S_{jj})|^2 \right] \\
&\quad + \oh\sum_{1\leq i<j\leq n} (S_{ii}+S_{jj})^{-2}|\nabla(S_{ii}+S_{jj})|^2 ~. 
\end{align*}

By invoking Lemma \ref{lem_Sij} and \eqref{nabla_Sij},
\begin{align*} 
&\quad (\heat)\log(\det(\ST)) - 2\CR_S - 2\CQ_S \\
&\geq 2\sum_{1\leq i<j\leq n}\sum_{1\leq k\leq n}\left[h_{(n+j)ki}^2+h_{(n+i)kj}^2+h_{(n+i)ki}^2+h_{(n+j)kj}^2 + 2\sum_{\ell>n}\left(h_{(n+\ell)ki}^2 + h_{(n+\ell)kj}^2\right) \right] \\
&\geq 2|A|^2 + 2(n-2)\sum_{1\leq i\leq n}\sum_{1\leq k\leq n} h_{(n+i)ki}^2 ~.
\end{align*}
This finishes the proof of this theorem.
\end{proof}

\subsection{Some Estimates}

Consider the function
\begin{align}
\vph(\ld_1,\ldots,\ld_n) &= \log\left(\prod_{1\leq i<j\leq n}\frac{1-\ld_i^2\ld_j^2}{(1+\ld_i^2)(1+\ld_j^2)}\right)
\label{fct_ld} \end{align}
defined where $\ld_i^2\ld_j^2 < 1$ for any $i\neq j$.  Since
\[0 < \frac{1-\ld_i^2\ld_j^2}{(1+\ld_i^2)(1+\ld_j^2)} \leq1 ~,\]
$\vph$ always takes value within $(-\infty,0]$.  It turns out that one can conclude the upper bound of $\ld_i^2$ from the lower bound of $\vph$.

\begin{lem} \label{est_vph}
Consider the function $\vph(\ld_1,\ldots,\ld_n)$ defined by \eqref{fct_ld} on $\{ (\ld_1,\ldots,\ld_n)\in\BR^n \,:\, \ld_i^2\ld_j^2 < 1 \text{ for any }i\neq j\}$.  Suppose that $\vph(\ld_1,\ldots,\ld_n) \geq -\dt$ for some $\dt>0$.  Then,
\begin{enumerate}
\item $\ld_i^2 \leq e^{\dt} - 1$ for all $i$;
\item $\ld_i^2\ld_j^2 \leq {(e^\dt-1)}/{(e^\dt+1)}$ for any $i\neq j$;
\item there exists $c_1 = c_1(n,\dt)>0$ such that $\left|\vph(\ld_1,\ldots,\ld_n)\right| \leq c_1\,\sum_{i=1}^{n}\ld_i^2$.
\end{enumerate}
\end{lem}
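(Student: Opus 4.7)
The overall plan is to exploit the fact that each of the factors $\tfrac{1-\ld_i^2\ld_j^2}{(1+\ld_i^2)(1+\ld_j^2)}$ lies in $(0,1]$, so that $\vph$ is a sum of nonpositive terms; consequently, for every pair $i<j$ one has the trivial but crucial bound
\[
\log\frac{1-\ld_i^2\ld_j^2}{(1+\ld_i^2)(1+\ld_j^2)} \;\geq\; \vph \;\geq\; -\dt.
\]
Parts (i) and (ii) will follow by combining this with two elementary algebraic comparisons, and part (iii) by standard Taylor-type estimates once the pointwise bounds from (i) and (ii) are in hand. I will tacitly assume $n\ge 2$ (for $n=1$ the product is empty and the statement is vacuous).

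For (i), I would first check the one-line algebraic fact $1-\ld_i^2\ld_j^2 \leq 1+\ld_j^2$ (equivalent to $-\ld_i^2 \leq 1$), which rearranges to $\tfrac{1-\ld_i^2\ld_j^2}{(1+\ld_i^2)(1+\ld_j^2)} \leq \tfrac{1}{1+\ld_i^2}$. Combining with the chain above, for any $i$ and any fixed $j\neq i$,
\[
-\log(1+\ld_i^2) \;\geq\; \log\frac{1-\ld_i^2\ld_j^2}{(1+\ld_i^2)(1+\ld_j^2)} \;\geq\; -\dt,
\]
so $\ld_i^2 \le e^{\dt}-1$. For (ii), I would use the complementary inequality $(1+\ld_i^2)(1+\ld_j^2) \geq 1+\ld_i^2\ld_j^2$ (just dropping the middle terms in the expansion); the chain then gives $1-\ld_i^2\ld_j^2 \geq e^{-\dt}(1+\ld_i^2\ld_j^2)$, and solving this linear inequality in $u=\ld_i^2\ld_j^2$ yields $u \leq (1-e^{-\dt})/(1+e^{-\dt}) = (e^\dt-1)/(e^\dt+1)$.

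For (iii), I would expand
\[
|\vph| \;=\; \sum_{1\leq i<j\leq n}\bigl[\log(1+\ld_i^2)+\log(1+\ld_j^2)-\log(1-\ld_i^2\ld_j^2)\bigr].
\]
The first two terms in each summand are bounded by $\ld_i^2+\ld_j^2$ using $\log(1+x)\leq x$. For the third, part (ii) gives $\ld_i^2\ld_j^2 \leq c := (e^\dt-1)/(e^\dt+1) < 1$, and on $[0,c]$ the elementary inequality $-\log(1-x) \leq x/(1-c)$ applies; then part (i) yields $\ld_i^2\ld_j^2 \leq (e^\dt-1)\min(\ld_i^2,\ld_j^2) \leq \tfrac{e^\dt-1}{2}(\ld_i^2+\ld_j^2)$. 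Putting these together, each summand is bounded by a constant depending only on $\dt$ times $\ld_i^2+\ld_j^2$; summing over pairs and using $\sum_{i<j}(\ld_i^2+\ld_j^2)=(n-1)\sum_i \ld_i^2$ produces the desired $c_1=c_1(n,\dt)$.

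There is no real obstacle here: the only conceptual step is the observation that each individual logarithm in $\vph$ inherits the global lower bound $-\dt$ because all the others are nonpositive; once that is in place, the three conclusions reduce to picking the right comparisons $1-\ld_i^2\ld_j^2 \leq 1+\ld_j^2$ for (i) and $(1+\ld_i^2)(1+\ld_j^2) \geq 1+\ld_i^2\ld_j^2$ for (ii), and standard Taylor estimates for (iii).
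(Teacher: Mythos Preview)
Your proposal is correct and follows essentially the same approach as the paper: both arguments rest on the observation that each factor $\tfrac{1-\ld_i^2\ld_j^2}{(1+\ld_i^2)(1+\ld_j^2)}$ lies in $(0,1]$, so the single-pair inequality $e^{\dt}(1-\ld_i^2\ld_j^2)\ge(1+\ld_i^2)(1+\ld_j^2)$ holds, from which (i) and (ii) drop out by elementary algebra, and (iii) follows from $\log(1+x)\le x$ together with a bound on $-\log(1-\ld_i^2\ld_j^2)$ using (i) and (ii). The only cosmetic difference is that the paper uses $-\log(1-x)\le x/(1-x)$ in (iii) whereas you use $-\log(1-x)\le x/(1-c)$ on $[0,c]$; both are adequate.
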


\begin{proof}
It follows from $\vph\geq -\dt$ that
\begin{align*}
\prod_{1\leq i<j\leq n}\frac{1-\ld_i^2\ld_j^2}{(1+\ld_i^2)(1+\ld_j^2)} &\geq e^{-\dt} ~.
\end{align*}
Since $0 < 1-\ld_i^2\ld_j^2 \leq 1$ and $1+\ld_i^2\geq 1$ for every $i\neq j$, one finds that $e^\dt(1 - \ld_i^2\ld_j^2) \geq (1+\ld_i^2)(1+\ld_j^2)$ for every $i\neq j$.  Hence,
\begin{align}
e^\dt - 1 \geq \ld_i^2 + \ld_j^2 \quad\text{and}\quad
\frac{e^\dt-1}{e^\dt+1} \geq \ld_i^2\ld_j^2 ~.
\label{est_vph1} \end{align}

Since $-\log(1-x) \leq \frac{x}{1-x}$ for $0\leq x<1$ and $\log(1+x)\leq x$ for $x>-1$,
\begin{align*}
-\vph(\ld_1,\ldots,\ld_n) &= \sum_{1\leq i<j\leq n} \left[ - \log(1-\ld_i^2\ld_j^2) + \log(1+\ld_i^2) + \log(1+\ld_j^2) \right] \\
&\leq \sum_{1\leq i<j\leq n} \left[ \frac{\ld_i^2\ld_j^2}{1-\ld_i^2\ld_j^2} + \ld_i^2 + \ld_j^2 \right] \\
&\leq c_1(n,\dt)\,\sum_{i=1}^n\ld_i^2
\end{align*}
where the last inequality uses \eqref{est_vph1}.  This finishes the proof of the lemma.
\end{proof}

For an area-decreasing map $f$, $\ST$ on $\Gm_f$ is positive definite.  In terms of the singular values \eqref{SVD0},
\begin{align*}
\log(\det\ST) 
&= \frac{n(n-1)}{2}\log 2 + \log\left( \prod_{1\leq i<j\leq n}\frac{2(1-\ld_i^2\ld_j^2)}{(1+\ld_i^2)(1+\ld_j^2)}\right) ~.
\end{align*}
The above discussion can be used to bound the gradient of $\log(\det\ST)$.

\begin{lem} \label{lem_grad_ST}
There exists a constant $c_2 = c_2(n) > 0$ with the following significance.  For an area-decreasing map $f:\Sm_1\to\Sm_2$, suppose that at a point $p\in\Sm_1$ there exists a $\dt > 0$ such that
\begin{align*}
\log(\det\ST) -  \frac{n(n-1)}{2}\log 2 &\geq -\dt  \quad\text{at }p ~.
\end{align*}
Then, $\left| \nabla\log(\det\ST) \right|^2 \leq c_2\,e^{4\dt}(e^\dt-1) \, |A|^2$ at $p$.
\end{lem}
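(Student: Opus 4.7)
The plan is to work at the point $p$ in the singular-value frame of \eqref{SVD0}, in which $S$ is diagonal and consequently $S^{[2]}$ is diagonal with eigenvalues $S_{ii}+S_{jj}$ for $1\leq i<j\leq n$. Therefore
\begin{align*}
\log\det\ST = \tfrac{n(n-1)}{2}\log 2 + \vph(\ld_1,\ldots,\ld_n),
\qquad \nabla_\ell \log\det\ST = \sum_{1\leq i<j\leq n}(S_{ii}+S_{jj})^{-1}\nabla_\ell(S_{ii}+S_{jj}).
\end{align*}
Cauchy--Schwarz on this sum of $\binom{n}{2}$ terms, combined with the gradient formula \eqref{nabla_Sij}, gives
\begin{align*}
|\nabla \log\det\ST|^2 \leq \binom{n}{2}\sum_{1\leq i<j\leq n}(S_{ii}+S_{jj})^{-2}\,|\nabla(S_{ii}+S_{jj})|^2
 = 4\binom{n}{2}\sum_{i<j}\sum_{k}\frac{\left(h_{(n+i)ki}C_{ii}+h_{(n+j)kj}C_{jj}\right)^2}{(S_{ii}+S_{jj})^2}.
\end{align*}

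The next step is to use the hypothesis together with Lemma \ref{est_vph} to turn the right-hand side into a bound in terms of $|A|^2$. The hypothesis says $\vph(\ld_1,\ldots,\ld_n)\geq -\dt$; since each summand in \eqref{fct_ld} is non-positive, each individual factor satisfies $(1+\ld_i^2)(1+\ld_j^2) \leq e^\dt(1-\ld_i^2\ld_j^2) \leq e^\dt$. This recovers $\ld_i^2\leq e^\dt-1$ and $\ld_i^2\ld_j^2 \leq (e^\dt-1)/(e^\dt+1)$, so $1-\ld_i^2\ld_j^2 \geq 2/(e^\dt+1)$. These bounds translate directly into
\begin{align*}
(S_{ii}+S_{jj})^{-1} = \frac{(1+\ld_i^2)(1+\ld_j^2)}{2(1-\ld_i^2\ld_j^2)} \leq \frac{e^\dt(e^\dt+1)}{4},
\qquad C_{ii}^2 = \frac{4\ld_i^2}{(1+\ld_i^2)^2} \leq 4(e^\dt-1).
\end{align*}

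Finally, I combine these pointwise estimates with the elementary inequality $(e^\dt+1)^2\leq 4e^{2\dt}$ (valid for $\dt\geq 0$), which gives $(S_{ii}+S_{jj})^{-2}\leq e^{4\dt}/4$. Expanding the square and applying $\sum_k h_{(n+i)ki}^2\leq |A|^2$ yields, for each $i<j$,
\begin{align*}
(S_{ii}+S_{jj})^{-2}|\nabla(S_{ii}+S_{jj})|^2 \leq \tfrac{e^{4\dt}}{4}\cdot 8(C_{ii}^2+C_{jj}^2)|A|^2 \leq 64\,e^{4\dt}(e^\dt-1)|A|^2,
\end{align*}
and summing over $i<j$ produces the desired bound with $c_2(n)=16\binom{n}{2}^2$. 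The only real obstacle is the bookkeeping needed to achieve the stated exponential dependence $e^{4\dt}(e^\dt-1)$: a naive use of $(1+\ld_i^2)\leq e^\dt$ twice in the numerator of $(S_{ii}+S_{jj})^{-1}$ would spuriously inflate the exponent, so one must track that the factor $(1+\ld_i^2)(1+\ld_j^2)$ sits beneath a single $e^\dt$ by the per-pair form of the hypothesis, while the smallness of $C_{ii}^2$ delivers the extra $(e^\dt-1)$.
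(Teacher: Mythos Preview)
Your proof is correct and follows essentially the same route as the paper's: compute $\nabla_k\log\det\ST$ at $p$ in the singular-value frame as $\sum_{i<j}(S_{ii}+S_{jj})^{-1}(S_{ii;k}+S_{jj;k})$, then feed in the bounds on $\ld_i^2$ and $\ld_i^2\ld_j^2$ from Lemma~\ref{est_vph} to control $(S_{ii}+S_{jj})^{-1}$ and $C_{ii}$. The only difference is that the paper uses the triangle inequality on $|\nabla_k\log\det\ST|$ directly rather than your Cauchy--Schwarz step; also, there is a harmless arithmetic slip in your final chain (the $64$ should be $16$, since $\tfrac{e^{4\dt}}{4}\cdot 8\cdot 8(e^\dt-1)=16e^{4\dt}(e^\dt-1)$), which leaves your stated $c_2(n)=16\binom{n}{2}^2$ consistent.
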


\begin{proof}
Since $\nabla\det\ST = (\det\ST)(Q^{AB}\nabla\ST_{AB})$ and $Q^{AB}|_p = (S_{ii}+S_{jj})^{-1}$ for $A=B=(ij)$, we have
\begin{align*}
\nabla_k\log(\det\ST) &\stackrel{\text{at }p}{=}  \sum_{1\leq i<j\leq n}(S_{ii}+S_{jj})^{-1}(S_{ii;k}+S_{jj;k}) \\
&= -2\sum_{1\leq i<j\leq n} \frac{(1+\ld_i^2)(1+\ld_j^2)}{(1-\ld_i^2\ld_j^2)} \left( h_{(n+i)ki}\frac{\ld_i}{1+\ld_i^2} + h_{(n+j)kj} \frac{\ld_j}{1+\ld_j^2} \right) ~.
\end{align*}
Together with Lemma \ref{est_vph} (i) and (ii), this implies that
\begin{align*}
\left| \nabla_k\log(\det\ST) \right| &\leq n(n-1)\, e^\dt(1+e^\dt)\sqrt{e^\dt-1} \, |A| ~.
\end{align*}
The lemma follows from this estimate.
\end{proof}

\section{Proof of Main Theorems} \label{sec_mainthm}

This section is devoted to prove Theorems \ref{thm_sec_curv} and \ref{thm_Ric_curv}.
If $f_t$ is area-decreasing, then Theorem \ref{thm_detS} implies that
\begin{align}
(\heat)\log(\det\ST) &\geq 2|A|^2 + 2\CR_S ~.
\label{detS1} \end{align}
We are going to express $\CR_S$ \eqref{RS} in terms of the curvatures of $g_1$ and $g_2$, and the singular values of $f_t$.  Due to \eqref{basis}, the ambient curvature $R_{kik(n+i)}$ is
\begin{align}
\ip{R(\be_k,\be_{n+i})\be_i}{\be_k} &= \ip{R_{g_1}(\pi_1(\be_k),\pi_1(\be_{n+k}))(\pi_1(\be_i))}{\pi_1(\be_k)} \notag \\
&\quad + \ip{R_{g_2}(\pi_2(\be_k),\pi_2(\be_{n+k}))(\pi_2(\be_i))}{\pi_2(\be_k)} \notag \\
&= -\frac{\ld_i}{1+\ld_i^2} \left( \frac{1}{1+\ld_k^2}\,\scd(i,k) - \frac{\ld_k^2}{1+\ld_k^2}\,\sct(i,k) \right) \label{curv_notation}
\end{align}
where $\scd(i,k)$ is the sectional curvature of $\bu_i\w\bu_k$, and $\sct(i,k)$ is the sectional curvature of $\bv_{i}\w\bv_{k}$.  With \eqref{Cii},
\begin{align} \begin{split}
\CR_S &= \sum_{1\leq i<j\leq n} (S_{ii}+S_{jj})^{-1} \left[ \frac{2\ld_i^2}{(1+\ld_i^2)^2} \sum_{1\leq k\leq n}\left( \frac{1}{1+\ld_k^2}\,\scd(i,k) - \frac{\ld_k^2}{1+\ld_k^2}\,\sct(i,k) \right) \right. \\
&\qquad\qquad\qquad\qquad\qquad \left. + \frac{2\ld_j^2}{(1+\ld_j^2)^2} \sum_{1\leq k\leq n}\left( \frac{1}{1+\ld_k^2}\,\scd(j,k) - \frac{\ld_k^2}{1+\ld_k^2}\,\sct(j,k) \right) \right]
\end{split} \label{RS0} \end{align}


\subsection{Proof of Theorem \ref{thm_sec_curv}}

\subsubsection*{Re-group the curvature term}
Suppose that $n\geq m\geq 2$, and
\begin{align*}
\scd\geq1 \qquad\text{and}\qquad \sct\leq\tau
\end{align*}
for some $\tau>0$.
It follows that the curvature term \eqref{RS0} in \eqref{detS1} obeys
\begin{align*}
\CR_S &\geq \sum_{i<j}(S_{ii}+S_{jj})^{-1} \left[ \frac{\ld_i^2}{(1+\ld_i^2)^2} P_i  + \frac{\ld_j^2}{(1+\ld_j^2)^2} P_j\right] ~,
\end{align*} where 
\begin{align*}
P_i &= 2\sum_{1\leq k\leq n, k\not=i}\left( \frac{1}{1+\ld_k^2} - \frac{\ld_k^2}{1+\ld_k^2}\tau \right) ~.
\end{align*}

Note that $P_i$ can be rewritten as 
\begin{align*}
P_i &= 2\sum_{1\leq k\leq n, k\not=i}\left( (1+S_{kk}) - (1-S_{kk})\tau \right) ~.
\end{align*}
Since $S_{kk} = 1$ for $k\geq \min\{n,m\} = m$, one finds that
\begin{align*}
P_i &= (2n-m-1)-(m-1)\tau+(1+\tau)\left(-S_{ii} + \sum_{1\leq k\leq m}S_{kk}\right) ~.
\end{align*}
When $m>2$, we have $-S_{ii} + \sum_{1\leq k\leq m}S_{kk}>0$, and $P_i\geq (2n-m-1)-(m-1)\tau$.
When $m=2$, we claim that 
$$\sum_{i<j}(S_{ii}+S_{jj})^{-1} \left[ \frac{\ld_i^2}{(1+\ld_i^2)^2} (-S_{ii} + \sum_{1\leq k\leq m}S_{kk}) + \frac{\ld_j^2}{(1+\ld_j^2)^2} (-S_{jj} + \sum_{1\leq k\leq m}S_{kk}) \right] >0$$ 
In either case, we have 
\begin{align}
\CR_S &\geq \sum_{1\leq i<j\leq n} (S_{ii} + S_{jj})^{-1} \left[ \frac{\ld_i^2}{(1+\ld_i^2)^2} + \frac{\ld_j^2}{(1+\ld_j^2)^2} \right] \big( (2n-m-1) - (m-1)\tau \big)~. \label{RS1}
\end{align}

The proof of the claim is based on direct computations.  When $m = 2$,
\begin{align*}
&\quad \sum_{1\leq i<j\leq 2} (S_{ii} + S_{jj})^{-1} \left[ \frac{\ld_i^2}{(1+\ld_i^2)^2}\left( -S_{ii} + \sum_{1\leq k\leq 2}S_{kk} \right) + \frac{\ld_j^2}{(1+\ld_j^2)^2}\left( -S_{jj} + \sum_{1\leq k\leq 2}S_{kk} \right) \right] \\
&= (S_{11}+S_{22})^{-1} \left[ \frac{\ld_1^2}{(1+\ld_1^2)^2}\frac{1-\ld_2^2}{1+\ld_2^2} + \frac{\ld_2^2}{(1+\ld_2^2)^2}\frac{1-\ld_1^2}{1+\ld_1^2} \right] \\
&= (S_{11}+S_{22})^{-1} \frac{(\ld_1^2 + \ld_2^2)(1-\ld_1^2\ld_2^2)}{(1+\ld_1^2)^2(1+\ld_2^2)^2} \geq 0 ~,
\end{align*}
and
\begin{align*}
&\sum_{1\leq i\leq 2} (S_{ii} + 1)^{-1} \frac{\ld_i^2}{(1+\ld_i^2)^2}\left( -S_{ii} + \sum_{1\leq k\leq 2}S_{kk} \right)\\
&= \oh\left( \frac{\ld_1^2}{1+\ld_1^2}\frac{1-\ld_2^2}{1+\ld_2^2} + \frac{\ld_2^2}{1+\ld_2^2}\frac{1-\ld_1^2}{1+\ld_1^2} \right) \\
&= \frac{ (\ld_1 - \ld_2)^2+2\ld_1\ld_2(1-\ld_1\ld_2)}{2(1+\ld_1^2)(1+\ld_2^2)} \geq 0 ~.
\end{align*}

Let
\begin{align*}
\Phi &= \log\det(\ST) - \frac{n(n-1)}{2}\log2 ~,
\end{align*}
which is the $\log\det$ of $(\oh F_t^*S)^{[2]}$.  In terms of singular values, $\Phi$ is equal to \eqref{fct_ld}.
According to \eqref{detS1} and \eqref{RS1},
\begin{align} \begin{split}
&\quad (\heat)\Phi - 2|A|^2 \\
&\geq \sum_{1\leq i<j\leq n} (S_{ii} + S_{jj})^{-1} \left[ \frac{\ld_i^2}{(1+\ld_i^2)^2} + \frac{\ld_j^2}{(1+\ld_j^2)^2} \right] \left( (2n-m-1) - (m-1)\tau \right)
\end{split} \label{detS2} \end{align}
as long as $f_t$ is area-decreasing.

\subsubsection*{Long-Time Existence}

The assumption of Theorem \ref{thm_sec_curv} implies that $\tau$ can be chosen to be less than $\frac{2n-m-1}{m-1}$, and $\Phi \geq -\dt_0$ for some $\dt_0 > 0$ at $t=0$.

As long as $f_t$ is area-decreasing, it follows from \eqref{detS2} that
\begin{align}
(\heat)\Phi &\geq 2|A|^2\geq 0 ~.  
\label{detS3} \end{align}
Let
\begin{align*}
T_0 &= \sup\{ T\geq0 : \Gm_{f_t}\text{ exists on } [0,T) \text{ and } f_t \text{ is area-decreasing on }[0,T)\} ~.
\end{align*}
By the maximum principle on \eqref{detS3}, $\min_{\Gm_{f_t}}\Phi$ is non-decreasing in $t$, and thus $\Phi \geq -\dt_0$ for any $t\in[0,T_0)$, it follows from Lemma \ref{est_vph} (i) and (ii) that being graphical and being area-decreasing is preserved for $t\in[0,T_0)$.  Moreover, the quantitative bounds, Lemma \ref{est_vph} (i) and (ii), imply that $T_0$ must be the maximal existence time of the mean curvature flow.

Thus, \eqref{detS3} holds as long as the flow exists.  With such an inequality, one can perform the blow-up argument on Huisken's backward heat kernel, and apply White's regularity theorem to conclude that there is no finite time singularity.  This argument is the same as \cite{W02}*{Theorem A}; see also \cite{LL11}*{p.5751}.  It will be omitted here.

\subsubsection*{Smooth Convergence to a Constant Map}

The first task is to study $\lim_{t\to\infty}\Phi$.  Since $\Phi \geq -\dt_0$ for all $t$, Lemma \ref{est_vph} (i) and (ii) imply that there exists $c_3 = c_3(n,m,\tau,\dt_0) > 0$ such that the right-hand side of \eqref{detS2} is no less than $c_3\,\sum_{i=1}^n\ld_i^2$.  Due to Lemma \ref{est_vph} (iii), there exists $c_1 = c_1(n,\dt_0)$ such that \eqref{detS2} becomes
\begin{align}
(\heat)\Phi &\geq 2|A|^2 - \frac{c_3}{c_1}\Phi ~.
\label{detS4} \end{align}
By the maximum principle, $0\geq \Phi\geq -\dt_0\exp(-\frac{c_3}{c_1}t)$ on $\Gm_{f_t}$ for any $t\geq0$.  Therefore, $\Phi\to0$ as $t\to\infty$.  With Lemma \ref{est_vph} (i), one finds that $\ld_i\to0$ as $t\to\infty$.  In other words, $f_t$ converges to a constant map in the level of first order derivative.

The next step is to derive the equation for $\exp(\Phi)$, which can be used to bound the second fundamental form.  Recall that $\Phi$ and $\log(\det\ST)$ differ by a constant, and hence $\nabla\Phi = \nabla\log(\det\ST)$.  Let $\dt(t) = \dt_0\exp(-\frac{c_3}{c_1}t)$.  According to Lemma \ref{lem_grad_ST},
\begin{align*}
\left|\nabla\log(\det\ST)\right|^2 &\leq c_2\, e^{4\dt(t)}(e^{\dt(t)}-1)\,|A|^2
\end{align*}
on $\Gm_{f_t}$ for any $t\geq0$.  Therefore, there exists a $T_1\geq1$ such that $\left|\nabla\log(\det\ST)\right|^2 \leq |A|^2$ whenever $t\geq T_1$.  With \eqref{detS3},
\begin{align*}
(\heat)\exp(\Phi) &= \exp(\Phi) \left[ (\heat)\Phi - \left|\nabla\log(\det\ST)\right|^2 \right] \\
&\geq \exp(\Phi)\,|A|^2
\end{align*}
whenever $t\geq T_1$.  By the same argument as in \cite{LL11}*{pp.5753-5754}, this inequality can be used to prove that $\max_{\Gm_t}|A|^2\to0$ as $t\to\infty$.  With $\ld_i\to0$ as $t\to\infty$, $f_t\to\text{constant map}$ as $t\to\infty$.  This finishes the proof of Theorem \ref{thm_sec_curv}.

\subsection{Proof of Theorem \ref{thm_Ric_curv}} \label{sec_proof_2}

The proof of Theorem \ref{thm_Ric_curv} is completely parallel to that of Theorem \ref{thm_sec_curv}.  We only emphasize how to derive equations analogous to \eqref{detS2}, \eqref{detS3} and \eqref{detS4}.

Similar to \eqref{curv_notation}, write $\Rcd(i,j)$ for the Ricci curvature of $(\bu_i,\bu_j)$, and $\Rct(i,j)$ for the Ricci curvature of $(\bv_i,\bv_j)$, where $\{\bu_i\}$ and $\{\bv_i\}$ are the bases given by \eqref{basis}.
When $k > m$, set $\sct(k,i) = 0$ and $\Rct(k,i) = 0$ for any $i$.  

Rewrite \eqref{RS0} as follows.
\begin{align*}
&\quad \frac{2\ld_i^2}{(1+\ld_i^2)^2} \sum_{1\leq k\leq n}\left( \frac{1}{1+\ld_k^2}\,\scd(i,k) - \frac{\ld_k^2}{1+\ld_k^2}\,\sct(i,k) \right) \\
&= \frac{\ld_i^2}{(1+\ld_i^2)^2} \sum_{1\leq k\leq n} \big[ (1+S_{kk})\,\scd(i,k) + (-1+S_{kk})\,\sct(i,k) \big] \\
&= \frac{\ld_i^2}{(1+\ld_i^2)^2} \left[ \left(\Rcd(i,i) - \Rct(i,i)\right) + \sum_{1\leq k\leq n}S_{kk}(\scd(i,k) + \sct(i,k)) \right] ~.
\end{align*}
Hence,
\begin{align}
\begin{split} \CR_S &= \sum_{1\leq i<j\leq n}(S_{ii}+S_{jj})^{-1} \left[ \frac{\ld_i^2}{(1+\ld_i^2)^2}\left(\Rcd(i,i) - \Rct(i,i)\right) \right. \\
&\qquad\qquad\qquad\qquad\qquad\quad \left. + \frac{\ld_j^2}{(1+\ld_j^2)^2}\left(\Rcd(j,j) - \Rct(j,j)\right) \right] \end{split} \label{Rica} \\
\begin{split} &\quad + \sum_{1\leq i<j\leq n}(S_{ii} + S_{jj})^{-1} \left[ \frac{\ld_i^2}{(1+\ld_i^2)^2}\sum_{1\leq k\leq n}S_{kk}(\scd(i,k) + \sct(i,k)) \right. \\
&\qquad\qquad\qquad\qquad\qquad\qquad \left. + \frac{\ld_j^2}{(1+\ld_j^2)^2}\sum_{1\leq k\leq n}S_{kk}(\scd(j,k) + \sct(j,k)) \right] ~. \end{split} \label{Ricb}
\end{align}

Re-group \eqref{Ricb} as
\begin{align*}
\sum_{1\leq i<j\leq n}\sum_{1\leq k\leq n} W_{ijk} &= \sum_{1\leq i<j\leq n}(W_{iji} + W_{ijj}) + \sum_{1\leq i<j\leq n}\sum_{\substack{1\leq k\leq n\\ k\neq\{i,j\}}} W_{ijk} \\
&= \sum_{1\leq i<j\leq n}(W_{iji} + W_{ijj}) + \sum_{1\leq i<j<k \leq n}(W_{ijk} + W_{jki} + W_{ikj}) ~,
\end{align*}
where $W_{ijk}$ is
\begin{align*}
(S_{ii} + S_{jj})^{-1} \left[ \frac{\ld_i^2}{(1+\ld_i^2)^2}S_{kk}(\scd(i,k) + \sct(i,k)) + \frac{\ld_j^2}{(1+\ld_j^2)^2}S_{kk}(\scd(j,k) + \sct(j,k)) \right] ~.
\end{align*}
We compute
\begin{align*}
&\quad W_{iji} + W_{ijj} \\
&= (S_{ii} + S_{jj})^{-1} \left[ \frac{\ld_j^2}{(1+\ld_j^2)^2}S_{ii}(\scd(j,i) + \sct(j,i)) + \frac{\ld_i^2}{(1+\ld_i^2)^2}S_{jj}(\scd(i,j) + \sct(i,j))\right] \\
&= \frac{\ld_i^2+\ld_j^2}{2(1+\ld_i^2)(1+\ld_j^2)}(\scd(i,j) + \sct(i,j)) ~,
\end{align*}
and
\begin{align*}
&\quad W_{ijk} + W_{jki} + W_{ikj} \\
&= (S_{ii} + S_{jj})^{-1} \left[ \frac{\ld_i^2}{(1+\ld_i^2)^2}S_{kk}(\scd(i,k) + \sct(i,k)) + \frac{\ld_j^2}{(1+\ld_j^2)^2}S_{kk}(\scd(j,k) + \sct(j,k)) \right] \\
&\quad + (S_{jj} + S_{kk})^{-1} \left[ \frac{\ld_j^2}{(1+\ld_j^2)^2}S_{ii}(\scd(j,i) + \sct(j,i)) + \frac{\ld_k^2}{(1+\ld_k^2)^2}S_{ii}(\scd(k,i) + \sct(k,i)) \right] \\
&\quad + (S_{ii} + S_{kk})^{-1} \left[ \frac{\ld_i^2}{(1+\ld_i^2)^2}S_{jj}(\scd(i,j) + \sct(i,j)) + \frac{\ld_k^2}{(1+\ld_k^2)^2}S_{jj}(\scd(k,j) + \sct(k,j)) \right] \\
&= V_{ijk}(\scd(i,j) + \sct(i,j)) + V_{jki}(\scd(j,k) + \sct(j,k)) + V_{ikj}(\scd(i,k) + \sct(i,k))
\end{align*}
where
\begin{align}
V_{ijk} &= \frac{(1+\ld_k^2)(\ld_i^2+\ld_j^2-2\ld_i^2\ld_j^2-2\ld_i^2\ld_j^2\ld_k^2 + \ld_i^4\ld_j^2\ld_k^2 + \ld_i^2\ld_j^4\ld_k^2)}{2(1+\ld_i^2)(1+\ld_j^2)(1-\ld_i^2\ld_k^2)(1-\ld_j^2\ld_k^2)} \notag \\
\begin{split} &= \frac{(1+\ld_k^2)}{2(1+\ld_i^2)(1+\ld_j^2)(1-\ld_i^2\ld_k^2)(1-\ld_j^2\ld_k^2)} \left[ (\ld_i-\ld_j)^2(1+\ld_i^2\ld_j^2\ld_k^2) \right.\\
&\qquad\qquad\qquad\qquad\qquad\qquad\qquad\qquad\qquad\quad \left. + 2\ld_i\ld_j(1-\ld_i\ld_j\ld_k^2)(1-\ld_i\ld_j) \right] ~.
\end{split} \label{fct_V} \end{align}
In particular, $V_{ijk}\geq0$ provided that $f_t$ is area-decreasing.

It follows that under the assumption of Theorem \ref{thm_Ric_curv},
\begin{align*}
\CR_S &= \sum_{1\leq i<j\leq n} \left[ \frac{\ld_i^2(1+\ld_j^2)}{2(1+\ld_i^2)(1-\ld_i^2\ld_j^2)}\left(\Rcd(i,i) - \Rct(i,i)\right) \right. \\
&\qquad\qquad\qquad \left. + \frac{\ld_j^2(1+\ld_i^2)}{2(1+\ld_j^2)(1-\ld_i^2\ld_j^2)}\left(\Rcd(j,j) - \Rct(j,j)\right) \right] \\
&\quad + \sum_{1\leq i<j\leq n}\frac{\ld_i^2+\ld_j^2}{2(1+\ld_i^2)(1+\ld_j^2)}(\scd(i,j) + \sct(i,j)) \\
&\quad + \sum_{1\leq i<j<k\leq n} \left[ V_{ijk}(\scd(i,j) + \sct(i,j)) + V_{jki}(\scd(j,k) + \sct(j,k)) \right. \\
&\qquad\qquad\qquad\quad \left. + V_{ikj}(\scd(i,k) + \sct(i,k)) \right] \\
&\geq \tau\sum_{1\leq i<j\leq {m}}\frac{\ld_i^2+\ld_j^2}{2(1+\ld_i^2)(1+\ld_j^2)}~.
\end{align*}
for some $\tau>0$. {In the last inequality, we recall the convention that $\sct(i,k) = 0$ when $k>m$ (see the end of the second paragraph in section \ref{sec_proof_2}), and the assumption that $\scd\geq0$ when $n>m$.} By \eqref{detS1},
\begin{align*}
(\heat)\log(\det\ST) &\geq 2|A|^2 + \tau \sum_{1\leq i<j\leq {m}}\frac{\ld_i^2+\ld_j^2}{(1+\ld_i^2)(1+\ld_j^2)}
\end{align*}
provided that $f_t$ is area-decreasing.  This immediate leads to \eqref{detS3}.  With the same argument by using Lemma \ref{est_vph}, one can obtain \eqref{detS4}.

\section{Implications and Examples} \label{sec_cor}

\subsection{Dilations} \label{sec_dila}

For a map $f:(\Sm_1,g_1) \to (\Sm_2,g_2)$, pick any point $p$, and let $\ld_i$ be the singular value of $\dd f|_p$.  For any $\rho>0$, denote by $\ld_i(\rho)$ the singular value of $\dd f|_p:(T_p\Sm_1, g_1) \to (T_{f(p)}\Sm_2,\rho^2 g_2)$.  It is easy to see that
\begin{align}
\ld_i(\rho) &= \rho\,\ld_i ~.
\label{svd_dila} \end{align}
The effect of the dilation on the sectional curvature is
\begin{align}
\scc_{\rho^2g_2} &= \rho^{-2}\scc_{g_2} ~.
\label{curv_dila} \end{align}
It is worth noting that $\ld_i(\rho)\ld_j(\rho)\cdot\scc_{\rho^2g_2}$ is independent of $\rho$.

\subsection{Round Spheres}

Denote by $\gstd$ the round metric of radius $1$ on the spheres.  Theorem \ref{thm_sec_curv} can be used to derive some quantitative criterion on the homotopy class of maps between spheres.

In \cite{Gr96}*{section 2.5}, Gromov proved that there exists $\vep(n,m) > 0$ such that a map $f:(S^n,\gstd) \to (S^m,\gstd)$ with $\ld_i\ld_j<\vep(n,m)$ (for any $i\neq j$) everywhere must be homotopically trivial.  One may also consult \cite{Gu13}*{p.1882}.  In \cite{TW04}*{Corollary 1.2}, the bound is improved to be $1$.  Theorem \ref{thm_sec_curv} with the dilation trick can further improve such a bound.

\begin{cor} \label{cor_sphere}
Suppose that $n\geq m\geq 2$. Any smooth map $f:(S^n,\gstd) \to (S^m,\gstd)$ whose singular value obeys
\begin{align*}
\ld_i\ld_j < \frac{2n-m-1}{m-1}
\end{align*}
(for any $i\neq j$) everywhere must be homotopically trivial.
\end{cor}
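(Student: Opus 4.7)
\textbf{Proof proposal for Corollary \ref{cor_sphere}.}
The plan is to reduce the statement to Theorem \ref{thm_sec_curv} by rescaling the target metric, using the dilation observation from section \ref{sec_dila}. The point is that the hypothesis $\ld_i\ld_j<\frac{2n-m-1}{m-1}$ fails to satisfy the area-decreasing assumption of Theorem \ref{thm_sec_curv} directly, but after shrinking the target sphere by a suitable factor $\rho<1$, one simultaneously ensures the area-decreasing condition and keeps the target curvature below the required threshold $\frac{2n-m-1}{m-1}$.

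First I would replace the target metric $\gstd$ on $S^m$ by $\rho^2\gstd$ for some $\rho>0$ to be chosen. By \eqref{svd_dila}, the singular values of $f:(S^n,\gstd)\to(S^m,\rho^2\gstd)$ at any point are $\rho\ld_i$, and by \eqref{curv_dila}, $\scc_{\rho^2\gstd}=\rho^{-2}$ (the domain sectional curvature remains $1$). To invoke Theorem \ref{thm_sec_curv}, I need
\begin{align*}
\rho^{-2} < \frac{2n-m-1}{m-1} \qquad\text{and}\qquad (\rho\ld_i)(\rho\ld_j)<1 \text{ for any } i\neq j \text{ everywhere.}
\end{align*}
Equivalently, $\rho^2$ must lie in the interval
\begin{align*}
\frac{m-1}{2n-m-1} < \rho^2 < \frac{1}{\sup_{p\in S^n,\,i\neq j}\ld_i(p)\ld_j(p)}.
\end{align*}

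Next I would verify that this interval is non-empty. The ordered largest product $\ld_1\ld_2$ of singular values is a continuous function on the compact manifold $S^n$, so the pointwise strict inequality $\ld_1\ld_2<\frac{2n-m-1}{m-1}$ together with compactness yields $\sup_{S^n}\ld_1\ld_2<\frac{2n-m-1}{m-1}$. Hence a valid $\rho$ exists. Fixing such a $\rho$, the map $f:(S^n,\gstd)\to(S^m,\rho^2\gstd)$ is area-decreasing and satisfies both curvature hypotheses of Theorem \ref{thm_sec_curv} (with the target curvature strictly below the threshold).

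Finally, Theorem \ref{thm_sec_curv} produces a smooth mean curvature flow of $\Gm_f$ in $(S^n\times S^m,\gstd\oplus\rho^2\gstd)$, existing for all time, remaining graphical, and converging smoothly to the graph of a constant map. The corresponding family $\{f_t\}$ gives a smooth homotopy from $f$ to a constant map, proving homotopy triviality. Since homotopy class is independent of the chosen metric on the target, this conclusion holds for the original map $f:(S^n,\gstd)\to(S^m,\gstd)$ as well. There is no serious obstacle here beyond the compactness argument that upgrades the pointwise strict inequality to a uniform one; the rest is a direct application of Theorem \ref{thm_sec_curv} via the dilation trick.
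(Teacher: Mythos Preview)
Your proof is correct and follows exactly the same approach as the paper: use compactness of $S^n$ to upgrade the pointwise bound to a uniform one, rescale the target metric by $\rho^2$ so that the map becomes area-decreasing while the target sectional curvature $\rho^{-2}$ stays below $\frac{2n-m-1}{m-1}$, and then apply Theorem \ref{thm_sec_curv}. One small remark: in your informal description you say ``shrinking the target sphere by a suitable factor $\rho<1$,'' but in fact $\rho$ need not be less than $1$ (for instance when $n=m$ one must take $\rho>1$); your actual argument, however, makes no use of this and is correct as written.
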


\begin{proof}
By the compactness of $S^n$, there exists $0<\rho^{-1}<\sqrt{\frac{2n-m-1}{m-1}}$ such that $\ld_i\ld_j < \rho^{-2}$ everywhere (and for any $i\neq j$).  Consider $f:(S^n,\gstd) \to (S^m,\rho^2\gstd)$.  With the help of \eqref{svd_dila}, $f$ is area-decreasing with respect to this rescaled metric.  Due to \eqref{curv_dila}, the sectional curvature of $(S^m,\rho^2\gstd)$ is $\rho^{-2}$, which is less than $\frac{2n-m-1}{m-1}$.  Thus, Theorem \ref{thm_sec_curv} applies, and $f$ is homotopically trivial.
\end{proof}

To the best our knowledge, the bound given by Corollary \ref{cor_sphere} is sharpest so far.  What follows are some known results and remarks about the bound.
\begin{enumerate}
    \item The bound obtained in \cite{TW04} is $1$.  If one applies the results in \cites{LL11, SS14} to this setting, the bound has not been improved, and is still $1$.
    \item Recently, Assimos, Savas-Halilaj and Smoczyk \cite{ASS22} investigated the case when $m=2$.  In \cite{ASS22}*{Corollary C}, the bound is improved to $n-1$.  Note that when $m=2$, Corollary \ref{cor_sphere} gives $2n-3$.
    \item For classical Hopf fibrations, $S^{3}\to S^{2}$, $S^{7}\to S^{4}$ and $S^{15} \to S^{8}$, the singular values are $2$ and $0$, with respect to $\gstd$.  Interestingly, for these dimensions, the bounds for $\ld_i\ld_j$ given by Corollary \ref{cor_sphere} are all $3$.  The result of \cite{ASS22} only applies to $S^{3}\to S^{2}$, and the bound is $2$.  A natural question is whether the bound can be pushed to $4$ in these dimensions.
\end{enumerate}

A map from $\Sm_1\subset\BR^{n+1}$ to $\Sm_2\subset\BR^{m+1}$ is called a \emph{polynomial map} if it is the restriction of a polynomial map from $\BR^{n+1}$ to $\BR^{m+1}$.  An interesting question \cite{PT99} in algebraic topology is whether an element of $\pi_n(S^m)$ can be represented by a polynomial map between unit spheres.  The above corollary can be used to prove a bound on the degree of a polynomial map of non-trivial homotopy class.

\begin{cor}
Suppose that $n\geq m\geq 2$. A polynomial map $f:(S^n,\gstd) \to (S^m,\gstd)$ which is not homotopically trivial must have degree no less than
\begin{align*}
\sqrt{\frac{2n-m-1}{m-1}} ~.
\end{align*}
\end{cor}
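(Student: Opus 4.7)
My plan is to reduce the statement to Corollary~\ref{cor_sphere} via the contrapositive, so the task becomes: if $f:(S^n,\gstd)\to(S^m,\gstd)$ is a polynomial map of polynomial degree $d$, then every singular value $\ld_i$ of $\dd f$ at every point satisfies $\ld_i \leq d$. Granting this, $\ld_i\ld_j \leq d^2$ everywhere, so whenever $d^2 < (2n-m-1)/(m-1)$ the hypothesis of Corollary~\ref{cor_sphere} is met and $f$ must be homotopically trivial. Contraposition then forces $d \geq \sqrt{(2n-m-1)/(m-1)}$ whenever $f$ is not homotopically trivial.

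To prove the pointwise bound $\ld_i \leq d$, I would fix a point $p\in S^n$ and a unit tangent vector $v\in T_pS^n$ and consider the unit-speed great circle $\gm(t) = \cos(t)\,p + \sin(t)\,v$ in $S^n$. Let $P:\BR^{n+1}\to\BR^{m+1}$ be a polynomial of degree $d$ extending $f$, and set $F(t) = P(\gm(t))$. Each component of $F(t)$ is a polynomial of total degree at most $d$ in $\cos t$ and $\sin t$; after the product-to-sum identities, each is a real trigonometric polynomial of degree at most $d$. For any fixed unit vector $u\in\BR^{m+1}$, the scalar function $h(t) = \ip{F(t)}{u}$ is then a trigonometric polynomial of degree at most $d$ with $|h(t)|\leq|F(t)|=1$ for all $t\in\BR$, because $F(t)\in S^m$.

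By Bernstein's inequality for trigonometric polynomials, $|h'(t)|\leq d$ for every $t$. Taking $u = \dd f_p(v)/|\dd f_p(v)|$ when $\dd f_p(v)\neq 0$ (and observing that the desired bound is trivial otherwise), one has $|\dd f_p(v)| = h'(0) \leq d$. Since $v$ and $p$ were arbitrary, all singular values of $\dd f$ are bounded by $d$ pointwise on $S^n$, which is the required bound.

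I do not expect a genuine obstacle: the proof is a combination of Corollary~\ref{cor_sphere} with two classical facts, namely that the great-circle restriction of a degree-$d$ polynomial is a trigonometric polynomial of degree at most $d$, and Bernstein's inequality. The only point deserving care is the observation that the total degree in $(\cos t,\sin t)$ really controls the trigonometric degree by the same number $d$ (rather than $2d$); this is precisely what produces the square root $\sqrt{(2n-m-1)/(m-1)}$ in the conclusion, rather than half of it.
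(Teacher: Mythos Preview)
Your proof is correct and follows the same route as the paper: reduce to Corollary~\ref{cor_sphere} via the bound $\ld_i\leq d$ for degree-$d$ polynomial maps between unit spheres. The only difference is that the paper simply cites \cite{PT02}*{Theorem~1.1} for this bound, whereas you supply a self-contained proof of it via Bernstein's inequality on the great-circle restriction---which is in fact essentially the argument in \cite{PT02}.
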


\begin{proof}
According to \cite{PT02}*{Theorem 1.1}, a degree $k$ polynomial map between spheres has $\ld_i\leq k$ for all $i$ everywhere.  This corollary follows from Corollary \ref{cor_sphere}.
\end{proof}

\subsection{Complex Projective Spaces}

Consider the complex projective space endowed with the Fubini--Study metric, $\gFS$.  In terms of the homogeneous coordinate $\bfZ$,
\begin{align*}
\gFS &= \frac{|\bfZ|^2\,|\dd\bfZ|^2 - (\br{\bfZ}\cdot\dd\bfZ)(\bfZ\cdot\dd\br{\bfZ})}{|\bfZ|^4} ~.
\end{align*}
For orthonormal vectors $X$ and $Y$, the sectional curvature along $X\w Y$ is $\scc_{\gFS}(X\w Y) = 1 + 3\left(\ip{JX}{Y}\right)^2$, where $J$ is the complex structure.  The Ricci curvature is $\Ric_{\gFS} = 2(\ell+1)\,\gFS$, where $\ell$ is the complex dimension of the projective space.  Note that when $\ell = 1$, we have $\BCP^1\cong S^2$, and $\gFS = \frac{1}{4}\gstd$.

It is not hard to see that Theorem \ref{thm_Ric_curv} applies to complex projective spaces. 
\begin{cor}[Corollary \ref{cproj0}]
    Suppose that $n\geq m\geq 1$.  Any area-decreasing map from $(\BCP^n,\gFS)$ to $(\BCP^m,\gFS)$ must be homotopically trivial.
\end{cor}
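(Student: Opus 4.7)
The plan is to reduce Corollary \ref{cproj0} to a direct application of Theorem \ref{thm_Ric_curv}, which reduces the work to a curvature verification on the Fubini--Study metric.

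First I would check the dimensional hypothesis: $\BCP^n$ and $\BCP^m$ have real dimensions $2n$ and $2m$, so $2n \geq 2m \geq 2$ whenever $n\geq m\geq 1$. In particular this covers the boundary case $m=1$, where $(\BCP^1,\gFS)$ is a round $2$-sphere of sectional curvature $4$.

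Next, using the two identities for the Fubini--Study metric stated in the excerpt, $\Ric_{\gFS} = 2(\ell+1)\,\gFS$ and $\scc_{\gFS}(X\w Y) = 1 + 3\langle JX,Y\rangle^2$, I would verify both curvature conditions of Theorem \ref{thm_Ric_curv}. The Ricci comparison reduces to the scalar inequality $\Rcd/g_1 = 2(n+1) \geq 2(m+1) = \Rct/g_2$, which follows from $n\geq m$ and gives $\Rcd(\bu,\bu) \geq \Rct(\bv,\bv)$ on all unit vectors. The sectional sum condition $\scd+\sct > 0$ holds because $\scc_{\gFS}\geq 1$ on every $2$-plane of either factor, so $\scd(\sm_1) + \sct(\sm_2) \geq 2 > 0$ for all $\sm_1 \in \Ld^2(T\BCP^n)$ and $\sm_2 \in \Ld^2(T\BCP^m)$.

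Once both hypotheses are confirmed, Theorem \ref{thm_Ric_curv} applies directly: the mean curvature flow of $\Gm_f$ in $\BCP^n \times \BCP^m$ exists smoothly for all time, remains graphical as $\Gm_{f_t}$, and converges smoothly to the graph of a constant map as $t\to\infty$. The smooth one-parameter family $\{f_t\}_{t\in[0,\infty)}$ together with its limit supplies the desired homotopy from $f$ to a constant map, proving the corollary. I do not anticipate any substantive obstacle here; the only items worth care are the boundary cases $m=1$ and $n=m$, both of which remain within the hypotheses of Theorem \ref{thm_Ric_curv}.
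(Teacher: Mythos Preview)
Your proposal is correct and follows exactly the same route as the paper: both reduce Corollary \ref{cproj0} to a verification of the hypotheses of Theorem \ref{thm_Ric_curv}, using the explicit Fubini--Study curvature identities $\Ric_{\gFS}=2(\ell+1)\,\gFS$ and $\scc_{\gFS}\in[1,4]$. The paper merely asserts that ``it is not hard to see that Theorem \ref{thm_Ric_curv} applies,'' whereas you spell out the Ricci comparison $2(n+1)\geq 2(m+1)$ and the sectional sum $\scd+\sct\geq 2>0$; there is no substantive difference.
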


As an area-decreasing map $f: \BCP^n \rightarrow \BCP^m$ with $n\geq m\geq 1$ induces a trivial map on the cohomology groups $f^*: H^2(\BCP^m) \rightarrow H^2 (\BCP^n)$, it is natural to raise the following question:

\begin{ques*}
Suppose a map $f: \BCP^n \rightarrow \BCP^m$ with $n> m\geq 1$ induces a trivial map on the cohomology groups $f^*: H^2(\BCP^m) \rightarrow H^2 (\BCP^n)$.  Is it necessarily homotopically trivial? 
\end{ques*}

We remark that in \cite{MW11}, it was proved that a pinched symplectomorphism of $(\BCP^n,\gFS)$ is homotopic to a biholomorphic isometry through the mean curvature flow. See also \cite{W13}.

The following corollary concerns $\pi_{2n+1}(\BCP^n)$, which is isomorphic to $\pi_{2n+1}(S^{2n+1})\cong\BZ$ by the long exact sequence induced by the Hopf fibration $S^1\to S^{2n+1}\to\BCP^n$.

\begin{cor}[Corollary \ref{cproj1}]
Any smooth map $f: (S^{2n+1},\gstd) \to (\BCP^n,\gFS)$ whose singular value satisfies
\begin{align*}
\ld_i\ld_j < \frac{2n}{2n+1}
\end{align*}
(for any $i\neq j$) everywhere must be homotopically trivial.
\end{cor}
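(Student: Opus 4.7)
The strategy is to reduce Corollary \ref{cproj1} to Theorem \ref{thm_Ric_curv} by rescaling the Fubini--Study metric, in the same spirit that Corollary \ref{cor_sphere} is deduced from Theorem \ref{thm_sec_curv} in Section \ref{sec_dila}. I would view $f$ as a map $(S^{2n+1},\gstd)\to(\BCP^n,\rho^2\gFS)$ for a parameter $\rho>1$ to be chosen. By \eqref{svd_dila}--\eqref{curv_dila} the new singular values are $\td\ld_i=\rho\,\ld_i$, the target's sectional curvatures lie in $[\rho^{-2},4\rho^{-2}]$, and, since the Ricci tensor is unchanged by a homothety, $\Ric_2/g_2$ becomes $2(n+1)/\rho^2$. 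On the domain side, $\scc_1\equiv 1$ and $\Ric_1/g_1=2n$.

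Next I would check the two hypotheses of Theorem \ref{thm_Ric_curv}. The sectional curvature sum $\scc_1+\scc_2>0$ holds automatically because both summands are positive. The Ricci comparison $\Ric_1/g_1\geq\Ric_2/g_2$ reduces to $\rho^2\geq(n+1)/n$. For the rescaled map to be area-decreasing one needs $\rho^2\ld_i\ld_j<1$ for all $i\neq j$, i.e.\ $\rho^2<1/\max_{i\neq j}\ld_i\ld_j$. By compactness of $S^{2n+1}$ and the strict pointwise hypothesis $\ld_i\ld_j<\frac{2n}{2n+1}$ one selects $\rho$ in the resulting admissible window, and Theorem \ref{thm_Ric_curv} then produces a smooth graphical mean curvature flow of $\Gm_f\subset S^{2n+1}\times\BCP^n$ that converges smoothly to the graph of a constant map as $t\to\infty$; this flow is the required null homotopy of $f$.

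The delicate point is that the Ricci threshold $\rho^2\geq(n+1)/n=\frac{2n+2}{2n}$ sits just above the area-decreasing threshold $\rho^2<\frac{2n+1}{2n}$ allowed by the sharp hypothesis, so the two windows just fail to overlap. A purely black-box use of Theorem \ref{thm_Ric_curv} would therefore only yield the slightly weaker bound $\ld_i\ld_j<n/(n+1)$. To capture the full constant $\frac{2n}{2n+1}$ I expect one must enter the proof and revisit the explicit formula \eqref{Rica}--\eqref{Ricb} for $\CR_S$ with the Kähler--Einstein data of $(\BCP^n,\gFS)$: at the critical value $\rho^2=\frac{2n+1}{2n}$ the Ricci difference $\Rcd-\Rct$ is only marginally negative, while the sectional curvature sum $\scd+\sct=1+\rho^{-2}\scc_{\gFS}$ is strictly positive and the coefficients $V_{ijk}$ from \eqref{fct_V} are nonnegative under the area-decreasing condition. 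The plan is to show that these positive sectional-curvature-sum contributions supply exactly the slack needed to keep $\CR_S\geq 0$ at the sharp threshold, after which the long-time existence and convergence arguments of Section \ref{sec_mainthm} run verbatim. Verifying this compensation is the main obstacle I anticipate.
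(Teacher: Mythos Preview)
Your rescaling strategy is exactly the paper's: it sets $\rho=\sqrt{\frac{2n+1}{2n}}$, observes that $f$ becomes area-decreasing for $(\BCP^n,\rho^2\gFS)$, and invokes Theorem~\ref{thm_Ric_curv} as a black box. However, you have in fact caught an arithmetic error in the paper's own proof. The paper asserts that $(\BCP^n,\rho^2\gFS)$ has Einstein constant $\rho^{-2}(2n+1)=2n$; but by the formula $\Ric_{\gFS}=2(\ell+1)\gFS$ stated just a few lines earlier (with $\ell=n$), the Einstein constant of $(\BCP^n,\gFS)$ is $2(n+1)$, not $2n+1$. Your computation is the correct one: the Ricci comparison in Theorem~\ref{thm_Ric_curv} forces $\rho^2\geq(n+1)/n=\frac{2n+2}{2n}$, which lies strictly above the area-decreasing ceiling $\rho^2<\frac{2n+1}{2n}$, so the black-box application delivers only the bound $\ld_i\ld_j<n/(n+1)$. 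The quaternionic remark immediately following the proof corroborates this reading: there the analogous bound is (correctly) given as $\frac{4n+2}{4(n+2)}$, i.e.\ the ratio of the Einstein constants of $S^{4n+3}$ and $\BHP^n$; the complex analogue of that ratio is $\frac{2n}{2(n+1)}=\frac{n}{n+1}$, not $\frac{2n}{2n+1}$.

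Your second-stage plan---to enter the proof of Theorem~\ref{thm_Ric_curv} and argue that the strictly positive sectional-sum contributions in \eqref{Ricb} absorb the small Ricci deficit in \eqref{Rica} at the critical scale---is a reasonable research proposal, but it is \emph{not} what the paper does, and it would require a genuine new estimate (the deficit and the gain are both of order $\sum_i\ld_i^2$, so the compensation is not automatic). As written, the paper's argument supports the corollary only with the weaker constant $n/(n+1)$; your analysis is more careful than the published proof on this point.
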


\begin{proof} \label{cproj2}
Let $\rho = \sqrt{\frac{2n+1}{2n}}$.  Consider $f:(S^{2n+1},\gstd) \to (\BCP^n,\rho^2\gFS)$.  According to \eqref{svd_dila}, $f$ is area-decreasing with respect to this rescaled metric.  By \eqref{curv_dila}, $(\BCP^n,\rho^2\gFS)$ is an Einstein metric with Einstein constant $\rho^{-2}(2n+1) = 2n$.  Since the Einstein constant of $(S^{2n+1},\gstd)$ is $2n$, Theorem \ref{thm_Ric_curv} applies, and $f$ is homotopically trivial.
\end{proof}

\begin{rmk}
These discussions apply to the quaternionic projective spaces as well.  The quaternionic projective space $\BHP^n$ also carries a naturally defined Fubini--Study metric $\gFS$.  For orthonormal vectors $X$ and $Y$, the sectional curvature along $X\w Y$ is $\sec_{\gFS}(X\w Y) = 1 + 3\sum_{\mu=1}^3\left(\ip{J_\mu X}{Y}\right)^2$.  The Ricci curvature is $\Ric_{\gFS} = 4(n+2)\,\gFS$.  Note that when $\ell = 1$, we have $\BHP^1\cong S^4$, and $\gFS = \frac{1}{4}\gstd$.  It follows that the analogous statement to Corollary \ref{cproj1} holds true for $(\BHP^n,\gFS)$.  For Corollary \ref{cproj2}, the bound becomes $\frac{4n+2}{4(n+2)}$.  Again, by considering the Hopf fibration $S^3 \to S^{4n+3} \to \BHP^n$, the bound is sharp as $n\to\infty$.
\end{rmk}

\section{Comments on other approaches and earlier results}

Other than the mean curvature flow and methods that are of topological nature or rely on the $h$-principle \cites{Gr96, Gu13}, there is another approach of Llarull \cite{Llarull1998} to study area-decreasing maps by Dirac operators. 

\begin{thm}
Suppose $M$ is an $n$-dimensional compact spin Riemannian manifold with scalar curvature no less than $n(n-1)$. Any area-non-increasing map $f:M \to S^n$  with non-zero degree is an isometry. 
\end{thm}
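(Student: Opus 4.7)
The plan is to follow Llarull's Dirac-operator approach: construct a twisted Dirac operator on $M$ whose index is forced to be nonzero by the degree hypothesis, derive a Weitzenb\"ock inequality whose curvature term is bounded below by the area-non-increasing assumption, and then analyze the equality case to conclude that $f$ is an isometry. Assume first that $n$ is even (the odd case reduces to the even one by replacing $M$, $S^n$ with $M\times S^1$, $S^n\times S^1$). Let $\CS_{S^n}$ denote the complex spinor bundle of $(S^n,\gstd)$ with its spin Levi-Civita connection, set $E = f^{\ast}\CS_{S^n}$ equipped with the pulled-back Hermitian metric and connection, and form the twisted Dirac operator $D^E$ acting on sections of $\Sigma M \otimes E$. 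By the Atiyah--Singer index theorem,
\[ \mathrm{ind}(D^E) = \int_M \hat{A}(TM) \wedge f^{\ast}\mathrm{ch}(\CS_{S^n}), \]
and the top-degree component of $\mathrm{ch}(\CS_{S^n})$ on an even-dimensional round sphere is a nonzero multiple of the fundamental class, so the integral equals a nonzero multiple of $\deg f$. Hence $\ker D^E$ contains a nontrivial section $\sigma$.

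Next, I would invoke the Lichnerowicz formula for twisted Dirac operators,
\[ (D^E)^2 = \nabla^{\ast}\nabla + \tfrac{1}{4}\kappa_M + \CR^E, \]
where $\CR^E = \tfrac{1}{2}\sum_{i,j} e_i\cdot e_j \otimes R^E(e_i,e_j)$. Fix $p\in M$ and choose orthonormal frames $\{e_i\}$ of $T_pM$ and $\{\epsilon_i\}$ of $T_{f(p)}S^n$ that diagonalize $df_p$ by $df_p(e_i) = \lambda_i \epsilon_i$. Since $(S^n,\gstd)$ has constant sectional curvature $1$, the spinor curvature along $\epsilon_k \wedge \epsilon_\ell$ acts as $\tfrac{1}{2}\epsilon_k\cdot\epsilon_\ell$, so $R^E(e_i,e_j) = \tfrac{1}{2}\lambda_i\lambda_j\,\epsilon_i\cdot\epsilon_j$ and
\[ \CR^E = \tfrac{1}{4}\sum_{i<j}\lambda_i\lambda_j\, e_i\cdot e_j \otimes \epsilon_i\cdot\epsilon_j. \]
A spectral analysis of this symmetric endomorphism on $\Sigma M \otimes E$ bounds its lowest eigenvalue from below by $-\tfrac{1}{4}\sum_{i<j}\lambda_i\lambda_j$, so the area-non-increasing condition $\lambda_i\lambda_j\leq 1$ produces the pointwise bound
\[ \langle \CR^E\sigma,\sigma\rangle \geq -\tfrac{n(n-1)}{4}|\sigma|^2. \]

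Pairing the Lichnerowicz identity with $\sigma\in\ker D^E$ and integrating gives
\[ 0 \;=\; \int_M|\nabla\sigma|^2 + \int_M\bigl(\tfrac{1}{4}\kappa_M\,|\sigma|^2 + \langle \CR^E\sigma,\sigma\rangle\bigr), \]
and the hypothesis $\kappa_M\geq n(n-1)$ together with the previous estimate forces the integrand on the right to be non-negative. Therefore $\nabla\sigma\equiv 0$ and every pointwise inequality is an equality, so $\kappa_M\equiv n(n-1)$ and $\lambda_i\lambda_j = 1$ for all $i<j$ at every point where $\sigma\neq 0$. Parallelism of $\sigma$ makes $|\sigma|$ constant and nonzero, so the equalities hold on all of $M$. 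The system $\lambda_i\lambda_j=1$ for every pair forces $\lambda_1=\cdots=\lambda_n=1$, hence $df$ is pointwise an orthogonal isometry, and $f$ is a local isometry, therefore a global isometry by connectedness.

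The main obstacle I anticipate is the sharp eigenvalue estimate on $\CR^E$: it exploits the rigid Clifford-algebra structure of the spinor bundle of the round sphere and requires a careful diagonalization of the symmetric operator $\sum_{i<j}\lambda_i\lambda_j\, e_i\cdot e_j\otimes \epsilon_i\cdot\epsilon_j$ on $\Sigma M \otimes E$. Only for this particular twisting does the lowest eigenvalue exactly cancel the scalar-curvature contribution $n(n-1)/4$, which is precisely why the theorem is sharp --- any other twisting would leave a gap that would prevent the equality case from closing. A secondary technical point is the odd-$n$ reduction, which requires a minor index-theoretic modification to ensure the relevant index is still detected by $\deg f$.
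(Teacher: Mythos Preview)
The paper does not prove this theorem. It is quoted in Section~6 as a result of Llarull \cite{Llarull1998}, purely to contextualize the Dirac-operator approach alongside the mean curvature flow method developed in the paper; no argument is given or even sketched. So there is no ``paper's own proof'' to compare against.

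That said, your outline is precisely Llarull's argument, and it is essentially correct. One genuine slip: twisting by the \emph{full} spinor bundle $\CS_{S^n}$ does not work, because on $S^{2k}$ the tangent bundle is stably trivial and hence $\mathrm{ch}(\CS_{S^{2k}}) = \mathrm{ch}(\CS^+) + \mathrm{ch}(\CS^-) = \prod_i(e^{x_i/2}+e^{-x_i/2})$ has vanishing top-degree part; the index you wrote down would then be $2^{k}\,\hat A(M)[M]$, which does not see $\deg f$ at all. Llarull twists by a \emph{half}-spinor bundle $\CS^{\pm}_{S^n}$, for which $\mathrm{ch}(\CS^+)-\mathrm{ch}(\CS^-) = \prod_i(e^{x_i/2}-e^{-x_i/2})$ has top part equal (up to sign) to the Euler class, so the index becomes a nonzero multiple of $\deg f$ as you intended. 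A second minor point: your final step ``$\lambda_i\lambda_j=1$ for all $i<j$ forces $\lambda_1=\cdots=\lambda_n=1$'' is valid only for $n\geq 3$; when $n=2$ one only gets $\lambda_1\lambda_2=1$, and a short additional argument is needed.
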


In view of Corollary  \ref{cproj0}, it is natural to speculate that the mean curvature flow will give a classification of the homotopy class of area-non-increasing maps from $(\BCP^n,\gFS)$ to $(\BCP^n,\gFS)$.

In the rest of this section, we explain how Theorem \ref{thm_detS} also implies \cite{SS14}*{Theorem A}.  In particular, we show that the term $\CR_S$ in Theorem \ref{thm_detS} is non-negative under the assumptions of  \cite{SS14}*{Theorem A}: $n\geq 2$, and there exists a constant $\sm>0$ such that
\begin{align}
\scd &> -\sm \qquad\text{and}\qquad \Rcd \geq (n-1)\sm \geq (n-1)\sct ~.
\label{asm_SS} \end{align}

As before, start with \eqref{detS1}, and rewrite \eqref{RS0}.  Since $\sct \leq \sm$,
\begin{align*}
\CR_S &\geq \sum_{1\leq i<j\leq n} (S_{ii}+S_{jj})^{-1} \left[ \frac{2\ld_i^2}{(1+\ld_i^2)^2} \sum_{\substack{1\leq k\leq n\\k\neq i}}\left( \frac{1}{1+\ld_k^2}\,\scd(i,k) - \frac{\ld_k^2}{1+\ld_k^2}\sm \right) \right. \\
&\qquad\qquad\qquad\qquad\qquad \left. + \frac{2\ld_j^2}{(1+\ld_j^2)^2} \sum_{\substack{1\leq k\leq n\\k\neq j}}\left( \frac{1}{1+\ld_k^2}\,\scd(j,k) - \frac{\ld_k^2}{1+\ld_k^2}\sm \right) \right] ~.
\end{align*}
With $\frac{2}{1+\ld_k^2} = 1 + S_{kk}$ and $\frac{2\ld_k}{1+\ld_k^2} = 1 - S_{kk}$, $\CR_S$ is no less than
\begin{align*}
&\quad \sum_{1\leq i<j\leq n} (S_{ii}+S_{jj})^{-1} \left[ \frac{\ld_i^2}{(1+\ld_i^2)^2} \sum_{\substack{1\leq k\leq n\\k\neq i}} \big( (\scd(i,k)-\sm) + S_{kk}(\scd(i,k)+\sm) \big) \right. \\
&\qquad\qquad\qquad\qquad\qquad \left. + \frac{\ld_j^2}{(1+\ld_j^2)^2} \sum_{\substack{1\leq k\leq n\\k\neq j}} \big( (\scd(j,k)-\sm) + S_{kk}(\scd(j,k)+\sm) \big) \right] \\
&= \sum_{1\leq i<j\leq n}(S_{ii}+S_{jj})^{-1} \left[ \frac{\ld_i^2}{(1+\ld_i^2)^2}\left(\Rcd(i,i) - (n-1)\sm\right) + \frac{\ld_j^2}{(1+\ld_j^2)^2}\left(\Rcd(j,j) - (n-1)\sm\right) \right] \\
&\quad + \sum_{1\leq i<j\leq n}(S_{ii} + S_{jj})^{-1} \left[ \frac{\ld_i^2}{(1+\ld_i^2)^2}\sum_{\substack{1\leq k\leq n\\k\neq i}}S_{kk}(\scd(i,k) + \sm)  + \frac{\ld_j^2}{(1+\ld_j^2)^2}\sum_{\substack{1\leq k\leq n\\k\neq j}}S_{kk}(\scd(j,k) + \sm) \right] ~.
\end{align*}
For the latest summation, one can apply the exact same argument as that for \eqref{Ricb}.  It follows that
\begin{align*}
\CR_S &\geq \sum_{1\leq i<j\leq n} \left[ \frac{\ld_i^2(1+\ld_j^2)}{2(1+\ld_i^2)(1-\ld_i^2\ld_j^2)}\left(\Rcd(i,i) - (n-1)\sm\right) \right. \\
&\qquad\qquad\qquad \left. + \frac{\ld_j^2(1+\ld_i^2)}{2(1+\ld_j^2)(1-\ld_i^2\ld_j^2)}\left(\Rcd(j,j) - (n-1)\sm\right) \right] \\
&\quad + \sum_{1\leq i<j\leq n}\frac{\ld_i^2+\ld_j^2}{2(1+\ld_i^2)(1+\ld_j^2)}(\scd(i,j) + \sm) \\
&\quad + \sum_{1\leq i<j<k\leq n} \left[ V_{ijk}(\scd(i,j) + \sm) + V_{jki}(\scd(j,k) + \sm) + V_{ikj}(\scd(i,k) + \sm) \right] ~,
\end{align*}
where $V_{ijk}$ is given by \eqref{fct_V}.  Under assumption \eqref{asm_SS}, the rest of the argument is the same as that for the proof of Theorem \ref{thm_Ric_curv}.

\begin{bibdiv}
\begin{biblist}

\bib{ASS22}{article}{
   author={Assimos, Renan},
   author={Savas-Halilaj, Andreas},
   author={Smoczyk, Knut},
   title={Graphical mean curvature flow with bounded bi-Ricci curvature},
   journal={Calc. Var. Partial Differential Equations},
   volume={62},
   date={2023},
   number={1},
   pages={Paper No. 12, 26},
}

\bib{CNS85}{article}{
   author={Caffarelli, L.},
   author={Nirenberg, L.},
   author={Spruck, J.},
   title={The Dirichlet problem for nonlinear second-order elliptic equations. III. Functions of the eigenvalues of the Hessian},
   journal={Acta Math.},
   volume={155},
   date={1985},
   number={3-4},
   pages={261--301},
}

\bib{DGS13}{article}{
   author={DeTurck, Dennis},
   author={Gluck, Herman},
   author={Storm, Peter},
   title={Lipschitz minimality of Hopf fibrations and Hopf vector fields},
   journal={Algebr. Geom. Topol.},
   volume={13},
   date={2013},
   number={3},
   pages={1369--1412},
}

\bib{Gr78}{article}{
   author={Gromov, Mikhael},
   title={Homotopical effects of dilatation},
   journal={J. Differential Geometry},
   volume={13},
   date={1978},
   number={3},
   pages={303--310},
}

\bib{Gr96}{article}{
   author={Gromov, Mikhael},
   title={Carnot-Carath\'{e}odory spaces seen from within},
   conference={
      title={Sub-Riemannian geometry},
   },
   book={
      series={Progr. Math.},
      volume={144},
      publisher={Birkh\"{a}user, Basel},
   },
   date={1996},
   pages={79--323},
}
\bib{Gu07}{article}{
   author={Guth, Larry},
   title={Homotopically non-trivial maps with small k-dilation},
   eprint={arXiv:0709.1241},
   url={https://arxiv.org/abs/0709.1241},
   status={preprint},
}
\bib{Gu13}{article}{
   author={Guth, Larry},
   title={Contraction of areas vs. topology of mappings},
   journal={Geom. Funct. Anal.},
   volume={23},
   date={2013},
   number={6},
   pages={1804--1902},
}

\bib{Hsu1972}{article}{
   author={Hsu, Agnes Chi Ling},
   title={A characterization of the Hopf map by stretch},
   journal={Math. Z.},
   volume={129},
   date={1972},
   pages={195--206},
}

\bib{Lawson1968}{article}{
   author={Lawson, H. Blaine, Jr.},
   title={Stretching phenomena in mappings of spheres},
   journal={Proc. Amer. Math. Soc.},
   volume={19},
   date={1968},
   pages={433--435},
}

\bib{LL11}{article}{
   author={Lee, Kuo-Wei},
   author={Lee, Yng-Ing},
   title={Mean curvature flow of the graphs of maps between compact manifolds},
   journal={Trans. Amer. Math. Soc.},
   volume={363},
   date={2011},
   number={11},
   pages={5745--5759},
}

\bib{Llarull1998}{article}{
   author={Llarull, Marcelo},
   title={Sharp estimates and the Dirac operator},
   journal={Math. Ann.},
   volume={310},
   date={1998},
   number={1},
   pages={55--71},
}

\bib{McGibbon82}{article}{
   author={McGibbon, C. A.},
   title={Self-maps of projective spaces},
   journal={Trans. Amer. Math. Soc.},
   volume={271},
   date={1982},
   number={1},
   pages={325--346},
}

\bib{MW11}{article}{
   author={Medo\v{s}, Ivana},
   author={Wang, Mu-Tao},
   title={Deforming symplectomorphisms of complex projective spaces by the
   mean curvature flow},
   journal={J. Differential Geom.},
   volume={87},
   date={2011},
   number={2},
   pages={309--341},
}

\bib{Mo84}{article}{
   author={M\o ller, Jesper Michael},
   title={On spaces of maps between complex projective spaces},
   journal={Proc. Amer. Math. Soc.},
   volume={91},
   date={1984},
   number={3},
   pages={471--476},
}

\bib{PT99}{article}{
   author={Peng, Chiakuei},
   author={Tang, Zizhou},
   title={Algebraic maps from spheres to spheres},
   journal={Sci. China Ser. A},
   volume={42},
   date={1999},
   number={11},
   pages={1147--1154},
}

\bib{PT02}{article}{
   author={Peng, Chiakuei},
   author={Tang, Zizhou},
   title={Dilatation of maps between spheres},
   journal={Pacific J. Math.},
   volume={204},
   date={2002},
   number={1},
   pages={209--222},
}

\bib{SS14}{article}{
   author={Savas-Halilaj, Andreas},
   author={Smoczyk, Knut},
   title={Homotopy of area decreasing maps by mean curvature flow},
   journal={Adv. Math.},
   volume={255},
   date={2014},
   pages={455--473},
}

\bib{TW04}{article}{
   author={Tsui, Mao-Pei},
   author={Wang, Mu-Tao},
   title={Mean curvature flows and isotopy of maps between spheres},
   journal={Comm. Pure Appl. Math.},
   volume={57},
   date={2004},
   number={8},
   pages={1110--1126},
}

\bib{W01}{article}{
   author={Wang, Mu-Tao},
   title={Mean curvature flow of surfaces in Einstein four-manifolds},
   journal={J. Differential Geom.},
   volume={57},
   date={2001},
   number={2},
   pages={301--338},
}

\bib{W02}{article}{
   author={Wang, Mu-Tao},
   title={Long-time existence and convergence of graphic mean curvature flow in arbitrary codimension},
   journal={Invent. Math.},
   volume={148},
   date={2002},
   number={3},
   pages={525--543},
}

\bib{W03}{article}{
   author={Wang, Mu-Tao},
   title={Gauss maps of the mean curvature flow},
   journal={Math. Res. Lett.},
   volume={10},
   date={2003},
   number={2--3},
   pages={287--299},
}

\bib{W05}{article}{
   author={Wang, Mu-Tao},
   title={Subsets of Grassmannians preserved by mean curvature flow},
   journal={Comm. Anal. Geom.},
   volume={13},
   date={2005},
   number={5},
   pages={981--998},
}

\bib{W13}{article}{
   author={Wang, Mu-Tao},
   title={Mean curvature flows and isotopy problems},
   conference={
      title={Surveys in differential geometry. Geometry and topology},
   },
   book={
      series={Surv. Differ. Geom.},
      volume={18},
      publisher={Int. Press, Somerville, MA},
   },
   date={2013},
   pages={227--235},
}

\end{biblist}
\end{bibdiv}

\end{document}